\documentclass[11pt]{amsart}

\usepackage{amsmath,amsthm,amssymb,amsfonts, amsopn, mathtools}
\usepackage{stmaryrd,tikz,tikz-cd}
 \usepackage{todonotes}
\usepackage{tcolorbox}
\usepackage[colorlinks=true, linktocpage=true]{hyperref}
\hypersetup{citecolor=blue}
\usepackage[all]{xy}
\usepackage{MnSymbol}

\usepackage{stackengine}
\def\subrangle#1{\stackengine{5pt}{}{$\!\scriptstyle #1$}{U}{l}{F}{F}{L}}
\newcommand{\norm}[1]{\left\lVert#1\right\rVert}
\newcommand{\End}{\textnormal{End}}

\newtheorem{theorem}{Theorem}

\newtheorem*{lemma*}{Lemma}         
\newtheorem{proposition}[theorem]{Proposition}
\newtheorem*{proposition*}{Proposition}

\theoremstyle{definition}

\newcommand{\C}{\mathbb C}

\newcommand{\R}{\mathbb R}

\title{Theta correspondence as a $C^*$-correspondence}

\author[\c{S}eng\"un]{Mehmet Haluk \c{S}eng\"un}
\address{\normalfont{School of Mathematical and Physical Sciences\\
University of Sheffield,
Hounsfield Road, Sheffield, S3 7RH, UK}}
\email{m.sengun@sheffield.ac.uk}

\begin{document}

\begin{abstract}
This is a survey of the recent works that describe theta correspondence as a $C^*$-correspondence. The main mechanism is elucidated in the special case when one of the groups involved in the theta correspondence is compact. 
\end{abstract}

 \maketitle
\tableofcontents


\section{Introduction}
This is a survey of the recent works \cite{Mesland-Sengun, Goffeng-Mesland-Sengun} that interpret Jian-Shu Li's explicit approach (\cite{Li-89}) to theta lifting (\cite{Howe-79})  through induction theory (\cite{Rieffel-74}) of representations of $C^*$-algebras.  I will focus on the local case here, however the approach works equally well in the global case \cite{GMS-adelic, Goffeng-Mesland-Sengun}. The exposition is catered towards readers with background in representation theory, in particular, the survey starts with an informal overview the required $C^*$-algebra background, focusing on group $C^*$-algebras and induction theory. While the background discussions have unavoidable overlap with those in \cite{Mesland-Sengun, Goffeng-Mesland-Sengun}, they are different in content and presentation. The reader is recommended to consult \cite{Mesland-Sengun-book-chapter, Crisp} for related materials.

Dual groups in which one member is compact is fundamental to the local theta correspondence (LTC); this is the best understood case of the correspondence and it is crucially used to understand the general case.  The survey is built around this ``{\em compact case}'' of the LTC as in this case much of the technical challenges are absent, making the main constructions that I would like to expose more transparent.

The survey is organised as follows. In Section \ref{cstarback}, I discuss the background in $C^*$-algebras, correspondences/Hilbert $C^*$-modules, and in Section \ref{cstarcomp}, I zoom in on the case of the $C^*$-algebra of a compact group. I continue by recalling the setup for the local theta correspondence for a dual pair in Section \ref{theta-background}, with the succeeding Section \ref{theta-compact-case} discussing the case of compact $G$ and Section \ref{sec:liswork} recalling the salient points in the work of Jian-Shu Li \cite{Li-89} needed for our discussion. Section \ref{compaclatatd} gives full details for how Rieffel induction as in \cite{Mesland-Sengun, Goffeng-Mesland-Sengun} recovers the theta correspondence in the case of compact $G$. In the last two sections, I survey the general results of \cite{Mesland-Sengun, Goffeng-Mesland-Sengun} with Section \ref{bimodule-construction} and Section  \ref{seceualada} describing the further $C*$-algebraic structures present in the equal rank case.

\subsection{Acknowledgements}
First and foremost, I thank my collaborators Magnus Goffeng and Bram Mesland. It was a great pleasure to explore these exciting connections together. This survey improved greatly with Magnus's insightful comments and corrections. Over the years, I benefitted from correspondences and conversations with many colleagues, too many to name here (but named in \cite{Mesland-Sengun, Goffeng-Mesland-Sengun}). I thank them all warmly. I am grateful to Roger Howe, Wee Teck Gan and Tomasz Przebinda for numerous mathematical conversations which I benefitted greatly from, and also for their interest and encouragement. Most of the major advances in this project were obtained during a fellowship funded by the EPSRC grant EP/V049119/1. I also wish to thank the Institut Henri-Poincaré for their hospitality during the thematic programme ``\emph{Representation Theory \& Noncommutative Geometry''} during the spring 2025. Finally, I thank the referee for their feedback.

\section{$C^*$-algebra background}
\label{cstarback}

Let us quickly review the basic $C^*$-algebra theory that we will need. We will borrow some parts from \cite[Section 2]{Goffeng-Mesland-Sengun}. We also refer the reader to \cite{Crisp} in this proceedings, as well as text books in $C^*$-algebras \cite{Dixmier,strung} and Hilbert $C^*$-modules \cite{Lance,raeburnwill}. Throughout the paper, \emph{we follow the Hilbert $C^*$-module convention where inner products on Hilbert spaces are linear in the second variable. }

\subsection{$C^*$-algebras} Recall that a {\bf $C^*$-algebra} is an involutive (complex) Banach algebra $A$ satisfying the so-called {\em $C^*$-identity}:
$$\norm{aa^*}=\norm{a}^2$$
for all  $a \in A$. 

The endomorphism algebra $\End(V)$ will denote the $C^*$-algebra of bounded linear operators on a Hilbert space $V$, with its operator norm and adjoint involution ${}^*$. It is the most fundamental example of a $C^*$-algebra. A closed $*$-subalgebras of $\End(V)$, for some $V$, is called a concrete $C^*$-algebra. A {\bf representation of a $C^*$-algebra} $A$ is a homomorphism $\pi : A \to \End(V)$ of $C^*$-algebras where $V$ is a Hilbert space. A foundational result of Gelfand-Naimark-Segal says that every $C^*$-algebra admits an injective  representation so that all $C^*$-algebras are isomorphic to concrete $C^*$-algebras.

Given two representations $\pi_1,\pi_2$ of $A$, we say that $\pi_1$ is {\bf weakly contained} in $\pi_2$ is the kernel of $\pi_1$ {\em contains} the kernel of $\pi_2$. Note that the kernel of a representation is a two-sided closed ideal of $A$ and such an ideal is, in particular, a $C^*$-subalgebra.

\subsection{Topology} The dual (i.e. the set of (equivalence classes of) irreducible representations) $\widehat{A}$ of a $C^*$-algebra $A$ carries a topology in which the open set are of given by duals of closed, two-sided ideals of $A$. Indeed such an ideal $J$ is actually a $C^*$-subalgebra of $A$ and its dual consists of those irreps of $A$ which do not vanish on $J$: 
$$\widehat{J}=\{ \pi \in \widehat{A} \mid \pi(J)\not= 0 \}.$$
The complementary closed subsets then are of the form 
$$\widehat{A}\setminus \widehat{J} = \{ \pi \in \widehat{A} \mid \pi(J) = 0 \}$$
for some closed, two-sided ideal $J$ of $A$. We point out that clopen (closed and open) subsets of $\widehat{A}$ correspond to complemented two-sided ideals of $A$.  

More generally, the collection\footnote{There are some set-theoretic complications here. These can be avoided by restricting to representations on separable Hilbert spaces.} ${\rm Rep}(A)$ of (equivalence classes of) all representations of $A$ can be equipped with a topology extending the one on $\widehat{A}$ described above. This topology is intimately related to the notion of weak containment mentioned earlier; the closure of a set $S$ of representations is given by the collection of representations which are weakly contained in each of the representations of $S$.

\subsection{Hilbert $C^*$-modules}\label{Hilbert-module}
There is an intrinsic notion of positivity in $C^*$-algebras; $a \in A$ is {\bf positive} if it is equal to $bb^*$ for some $b \in A$. Given an injective homomorphism of $A$ into some  $\End(V)$, positivity of $a$ is equivalent to $a$ acting as a positive operator on $V$ in the sense of quadratic forms\footnote{That is, the quadratic form $q_a(x):=\langle x,ax\rangle$ is positive in the sense that $q_a(x)\geq 0$ for any $x\in V$.}.  Using this notion of positivity, we can talk about a module over $A$ which is equipped with a positive-definite $A$-valued inner product. Completing such a module with respect to the norm arising from the $A$-valued inner product give us the notion of a Hilbert module over $A$, generalizing the notion of a Hilbert space (where $A=\C$).

 A {\bf Hilbert $C^{*}$-module} over a $C^*$-algebra $B$ is a complex vector space $X$ that is a right $B$-module equipped with a map $\langle {\cdot}, {\cdot} \rangle : X {\times} X \to B$ such that for $x,y,z\in X$, $b \in B$ and $\lambda,\mu\in \C$, we have
\begin{enumerate}
\item[(i)] $\langle x, \lambda y + \mu z \rangle = \lambda \langle x, y \rangle + \mu \langle x, z \rangle$,
\item [(ii)] $\langle x,y \rangle^* = \langle y,x \rangle$, 
\item[(iii)] $\langle x, y{\cdot}b \rangle = \langle x, y \rangle b$,
\item[(v)] $\langle x,x \rangle \geq 0$,
\item[(vi)] $\langle x,x \rangle =0$ if and only if $x=0$,
\item[(vi)] $X$ is complete with respect to the norm $||x||:=||\langle x,x \rangle ||_{B}^{1/2}$. 
\end{enumerate}
We say that $X$ is {\em full} if $\langle X,X \rangle$ spans a dense subspace in $B$. Note that, as is standard in $C^*$-theory, the inner products above are {\em right} linear. This is compatible with our over all convention on Hilbert spaces -- the Hilbert modules arising for $B=\C$.

Now let $X$ be a Hilbert $C^{*}$-module over $B$. We let $\End_B ^*(X)$ denote the algebra of $B$-module endomorphisms $T:X\to X$ that are \emph{adjointable} in the sense that there exists $T^{*}:X\to X$ such that $\langle Tx,y\rangle=\langle x,T^{*}y\rangle$.  The vector space $\End_B^*(X)$ is a subspace of the space of bounded $B$-module endomorphisms and is a $C^*$-algebra under the operator norm that one derives from that on $X$. In the special case $B=\C$, a $B$-Hilbert $C^*$-module is a Hilbert space and in this case $\End_\C^*(V)\equiv \End(V)$.

Given $x,y \in X$, we define the operator $T_{x,y}$ by $T_{x,y}(z)=x \langle y,z\rangle$ for every $z \in X$. The linear span of the rank one operators form a two-sided ideal of $\End_B ^*(X)$ and the norm closure of this ideal is a $C^{*}$-algebra known as the ideal of {\em $B$-compact operators}, denoted $\mathbb{K}_B(X)$. We point out that these operators are not necessarily compact in the sense of Banach space theory.

The linear span of the image $\langle X,X \rangle$ in $B$ is a two-sided ideal of $B$. We will refer to the {\em closure} of this ideal as the {\bf ideal cut out by the inner product} and will denote it $J_B$. Note that $X$ can be viewed as a full Hilbert module over $J_B$.

\subsection{Example} 
\label{first-example} 
Let $(V,\langle{\cdot},{\cdot}\rangle)$ be a Hilbert space with endomorphism algebra $E:=\End(V)$. We write $V^*:=\mathrm{Hom}(V,\C)$ for the continuous linear maps $V\to \C$. View $V^*$ as a right module over $E$ via
$$v{\cdot}T := v\circ T, \quad v \in V^*, \ \ T \in E$$
and equip it with the $E$-valued form 
\begin{equation} 
\label{compact-inner-product} 
\llangle w,v \rrangle:= T_{w,v}, \quad v,w\in V^*,
\end{equation} 
where $T_{w,v}$ is the rank-one operator $T_{w,v}(z)=w^*v(z)$ where $w^*\in V$ is the element uniqely determined by $w(z)=\langle w^*,z\rangle$. Note that $w\mapsto w^*$ defines an antilinear isomorphism $V^*\to V$. The expression \eqref{compact-inner-product} defines a right-linear Hermitian form that is compatible with the right $E$-module structure of $V$; that is, it satisfies properties (i),(ii) and (iii) of Section \ref{Hilbert-module}. 
The intrinsic definition of an endomorphism $T$ being positive is equal to saying that $T$ acts as a positive operator on $V$, which can be characterised as having $\langle T(v), v \rangle \geq 0$ for all $v \in V^*$. It follows immediately from the identity
$$\langle \llangle v,v \rrangle(z),z \rangle = \langle v^* v(z),z\rangle =  \left | v(z)\right |^2, \quad z\in V, \, v\in V^*,$$ 
that the form $\llangle {\cdot},{\cdot} \rrangle$ is positive-definite; it satisfies properties (iv) and (v) of Section \ref{Hilbert-module}.

Finally, given $v \in V^*$, introduce the new norm $\norm{v}:=\norm{\llangle v,v\rrangle}_E^{1/2}$
where $\norm{{\cdot}}_E$ is the operator norm on $E$. Observe that the operator norm of the rank one operator $\llangle v,v\rrangle$ is simply the square of the usual Hilbert space norm of $v$. Therefore the new norm is simply equal to the Hilbert space norm. We conclude that $V^*$ is already complete with the new norm and hence satifies property (vi) of Section \ref{Hilbert-module}. 

We have shown that $V^*$, with the above $E$-module structure and $E$-valued inner product, is a Hilbert module over $E$. Observe that the image of $\llangle {\cdot},{\cdot} \rrangle$ in $E$ is the ideal of rank one operators.  Hence the ideal cut out by the inner product is the ideal $\mathbb{K}(V)$ of compact operators on $V$. Thus $V^*$ is always a full Hilbert module over  $\mathbb{K}(V)$, and it is is a full Hilbert module over $E$ if and only if $V$ is finite dimensional. In summary, for a Hilbert space $V$, the space $V^*$ is naturally a right $\End(V)$-Hilbert $C^*$-module. We remark that $V$ would naturally be a \emph{left} $\End(V)$-Hilbert $C^*$-module with inner product defined from ${}_E\llangle w,v \rrangle(z):= w\langle v,z\rangle_V$ for $z,v,w\in V$.

\subsection{$C^*$-correspondences} Let $A,B$ be $C^*$-algebras and $X$ a Hilbert $C^*$-module over $B$. Assume that there is a homomorphism $\alpha: A \to \End_B ^*(X)$ of $C^*$-algebras. In such a situation, $X$ is called a {\bf $C^*$-correspondence} for $(A,B)$, or simply an {\em $(A,B)$-correspondence}. 

A {\bf representation of a $C^*$-algebra} $A$ is simply an $(A,\C)$-correspondence, in other words, it is a homomorphism 
$\alpha: A \to \End(V)$ where $V$ is a Hilbert space. We call $\alpha$ {\em non-degenerate} if $\alpha(A)V$ is dense in $V$.

\subsection{Induction} \label{induction} Given an $(A,B)$-correspondence $\alpha: A \to \End_B ^*(X)$, we can induce non-degenerate representations of $B$ to those of $A$ as follows. Let $\pi:B\to \End(V)$ be a representation of $B$ on a Hilbert space $V$. Consider the algebraic tensor product of vector spaces $X \otimes^{\mathrm{alg}} V$. The right sesquilinear form 
\begin{equation} \label{localisation-inner-product} \langle x \otimes v, x' \otimes v'\rangle := \langle v, \pi(\langle x,x' \rangle)v' \rangle\subrangle{V},
\end{equation}
can be shown to be positive. Therefore it decends to a positive-definite form on $(X \otimes^{\mathrm{alg}} V)/N$
where 
$$N:=\left\{\xi \in X \otimes^{\mathrm{alg}} V \mid \langle \xi, \xi \rangle = 0 \right\}$$
is the subspace of null vectors. Completion of $(X \otimes^{\mathrm{alg}} V)/N$ with respect to the inner product \eqref{localisation-inner-product} is a Hilbert space that we denote by $X \otimes_{B}V$ and is commonly called the {\em internal tensor product} of $X$ and $V$ over $B$.

Consider the action 
$$a(x \otimes v) := \alpha(a)(x) \otimes v$$ 
of $A$ on the space $X\otimes^{\mathrm{alg}} V$. The nontrivial fact, which actually will be important to us later in regards to theta correspondence, that the null space $N$ equals the \emph{balancing subspace}
\begin{equation}
\label{balancing-ideal}
I:=\mathrm{span}\left\{xb \otimes v - x\otimes \pi(b)(v):x \in X, \ v \in V, \ b \in B\right\}.
\end{equation} 
allows us to easily see that th action of $A$ descends down to the quotient$(X \otimes^{\mathrm{alg}} V)/N$. Standard arguments then can be used to show that this action extends to a representation of $A$ on the Hilbert space $X \otimes_B V$ which we will denote $\textnormal{\small Ind}_{B}^{A}(X, \pi),$
and refer to as the $A$-representation {\bf induced from $\pi$ via $X$}. This procedure gives us a functor
$${\rm Ind}_B^A(X) : {\rm Rep}(B) \to {\rm Rep}(A)$$
from the category of non-degenerate representations of $B$ to that of $A$. An important feature of this functor is that it is continuous with respect to {\em weak containment} of representations, that is, it respects the inclusion of kernels of representations.

\subsection{Morita equivalence} We say that two $C^*$-algebras $A$ and $B$ are {\bf Morita equivalent} if there exists an $(A,B)$-correspondence  $\alpha: A \to \End_B ^*(X)$ such that $X$ is a full Hilbert module over $B$ and the $C^*$-algebra $\mathbb{K}_B(X)$ of $B$-compact operators on $X$ is isomorphic to the image $\alpha(A)$. 

Morita equivalent $C^*$-algebras share many important features. In particular, if $A$ and $B$ are Morita equivalent, the functor ${\rm Ind}_B^A(X)$ establishes an equivalence between the categories of non-degenerate representations of $A$ and $B$. We also note that Morita equivalence is an equivalence relation.

Sometimes an $(A,B)$-correspondence $X$ comes with a natural {\em left}\footnote{For the definition of a left Hilbert module, we modify the definition of a (right) Hilbert module so that the module action is on the left and the inner product is linear in the left variable.} Hilbert $C^*$-module structure over $A$. If the two $C^*$-valued inner products are compatible in the following way
\begin{equation} \label{imprimitivity-compatibility}   \prescript{}{A}{\langle x, y \rangle} {\cdot} z = x {\cdot} \langle y,z \rangle\subrangle{B},
\end{equation}
then we call $X$ an $(A,B)$-{\bf imprimitivity bimodule}\footnote{In the literature, it is often required that $X$ is a full Hilbert module both over $A$ and over $B$. We do not insist on this.}. Here $x,y,z$ are in $X$ and $\prescript{}{A}{\langle {\cdot},{\cdot} \rangle}$,$\langle {\cdot},{\cdot} \rangle\subrangle{B}$ are the $A$-valued and $B$-valued inner products on $X$ respectively. The key point is that \eqref{imprimitivity-compatibility} implies that the ideals 
$J_A \subseteq A$ and $J_B \subseteq B$ cut out by the inner-products are Morita equivalent.

\subsection{Group $C^*$-algebras} Let $G$ be a locally compact Hausdorff topological group. Consider the convolution algebra $L^1(G)$ of integrable complex valued functions on $G$. By norming $L^1(G)$ with the $L^1$-norm, $L^1(G)$ forms an involutive Banach algebra for the involution 
$$f^*(s):=\Delta(s^{-1})\overline{f(s^{-1})}, \quad f \in L^1(G), \ s \in G,$$
where $\Delta$ is the modular functions of $G$. Just as in the case of a finite group and its group algebra, we can ``integrate'' a unitary representation $\pi:G \to \mathcal{U}(V)$ of $G$ to a $*$-homomorphism
$$\pi: L^1(G) \to \End(V), \quad \pi(f)=\int_G f(s) \pi(s)\ ds$$
which we call a representation of $L^1(G)$ on the Hilbert space $V$. 
This process leads to a bijection between unitary representations of $G$ and nondegenerate representations of $L^1(G)$. 

It was realized in the late 1940's by Irving Segal that it is better to promote $L^1(G)$ to a $C^*$-algebra\footnote{The $L^1$-norm does not satisfy the $C^*$-identity.} as follows; given $f \in L^1(G)$, introduce 
\begin{equation} \label{Cstar-norm} \norm{f}:= \sup_{\pi \in \widehat{G}} \norm{\pi(f)}_{\End(V_\pi)}.
\end{equation}
Here $\widehat{G}$ is the {\bf unitary dual} of $G$; the set of (equivalence classes of) irreducible unitary representation of $G$. It turns out that this formula defines a norm on $L^1(G)$ which satisfies the $C^*$-identity. We define the {\bf $C^*$-algebra of $G$}, denoted $C^*(G)$, as the completion of $L^1(G)$ with respect to the norm defined in \eqref{Cstar-norm}. For further motivation to study $C^*(G)$, see \cite{Crisp,Dixmier,strung}.

Representations of $L^1(G)$ extend uniquely to $C^*(G)$ and we obtain a bijection between unitary representations of $G$ and nondegenerate representations of $C^*(G)$. 
The unitary dual $\widehat{G}$ of $G$ carries the so-called Fell topology which is based on convergence of matrix coefficients. For example, in this topology, $\widehat{G}$ is discrete if $G$ is compact. The bijection between the duals of $G$ and $C^*(G)$ is actually a homeomorphism when we consider the topologies on the two sides. 

The {\bf reduced $C^*$-algebra of $G$}, denoted $C^*_r(G)$, is defined as the image of $C^*(G)$ under the (integrated form of the) regular representation of $G$ on $L^2(G)$. Thus $C^*_r(G)$ is a quotient of $C^*(G)$ and its representations correspond, under the above mentioned bijection, to those unitary representations of $G$ which are weakly contained in the regular representation. Such representations form a closed subspace of the unitary dual of $G$ that is called the {\bf reduced dual of $G$}. If $G$ is a reductive algebraic group, then the reduced dual is commonly known as the {\em tempered dual} since it coincides with the set of tempered unitary representations\footnote{That is, irreducible unitary representations whose matrix coefficients are $L^{2+\epsilon}$ for any $\epsilon>0$, see \cite{cowhaaghow}.}.

\section{$C^*$-algebras of compact groups} 
\label{cstarcomp}

As I wish to put the compact case of the theta correspondence at the heart of this survey, let us look at the case of a compact group $G$ in detail. The representation theory of a compact group is well understood, and further context can be found in for instance \cite{deitmarechterhoof} or \cite[Chapter 0 and 1]{wallach}.

By the Peter-Weyl Theorem, every irreducible unitary representation of $G$ is a subrepresentation of the regular representation so that the unitary dual and the reduced dual are the same. Moreover, as the topology on the unitary dual is discrete, every singleton is clopen. Each such clopen singleton corresponds to a complemented two-sided closed ideal as we discussed above. Therefore, $C^*(G)$ ``decomposes into'' a direct sum of these complemented ideals, one for each irrep of $G$. Let us be more precise.

\subsection{Isolated points and imprimitivity bimodules} Given an irreducible unitary representation $(\pi,V_\pi)$ of $G$, consider the integrated representation 
$$\pi: C^*(G) \to \End(V_\pi),$$
which is nothing but a $(C^*(G), \C)$-correspondence as we mentioned earlier. A very elegant way of studying the complemented two-sided ideal of $C^*(G)$ that corresponds to the clopen $\{ \pi \}$ is via promoting this correspondence to an imprimitivity bimodule. We do this as follows. 

Given $v,w \in V_\pi$,  introduce the matrix coefficient function $\prescript{}{G}\langle v,w \rangle$:
\begin{equation} 
\label{left-inner-compact-group} 
\prescript{}{G}\langle v,w \rangle(s):=d_\pi\langle \pi(s)(w), v \rangle\subrangle{V_\pi}, \quad v,w \in V_\pi, \ s \in G.
\end{equation}
Here $d_\pi$ is the dimension of $V_\pi$. As $G$ is compact, the matrix coefficients $\prescript{}{G}\langle v,w \rangle$ lie in $C^*(G)$ and $\prescript{}{G}\langle {\cdot},{\cdot} \rangle$ is valued in $C^*(G)$. It is easy to see that $\prescript{}{G}\langle {\cdot},{\cdot} \rangle$ is left linear, Hermitian and is compatible with the left $C^*(G)$-module structure on $V_\pi$ (thus, satisfies the first three properties of a left Hilbert module). 

We need $\prescript{}{G}\langle {\cdot},{\cdot} \rangle$ to be positive-definite. This will follow from the Schur orthogonality relations. Given $v \in V_\pi$, positivity of $\prescript{}{G}\langle v,v \rangle$ as an element of $C^*(G)$ is equivalent to $\sigma(\prescript{}{G}\langle v,v \rangle)$ being a positive operator on $V_\sigma$ for every irreducible unitary representation 
$(\sigma, V_\sigma)$ of $G$. Given such $(\sigma, V_\sigma)$, the operator $\sigma(\prescript{}{G}\langle v,v \rangle)$ is positive if for all $w \in V_\sigma$, we have 
$$\langle \sigma(\prescript{}{G}\langle v,v \rangle)(w),w\rangle\subrangle{V_\sigma} \geq 0.$$
This is true because
\begin{align*} \langle \sigma(\prescript{}{G}\langle v,v \rangle)(w),w\rangle\subrangle{V_\sigma} &= 
d_\pi\int_G \overline{\langle \pi(s)(v), v \rangle}\subrangle{V_\pi} \langle \sigma(s)(w),w\rangle\subrangle{V_\sigma} ds \\
&=\begin{cases}   |\langle w,v \rangle\subrangle{V_\pi} |^2  &{\rm if} \ \pi \simeq \sigma, \\  0 &{\rm else.}
\end{cases}
\end{align*}
The last equality follows from Schur's orthogonality relation of course.

We next turn to the compatibility condition \eqref{imprimitivity-compatibility}, which is the reason we introduced the factor $d_\pi$ in \eqref{left-inner-compact-group}. Indeed, we have 
\begin{align*} \langle \prescript{}{G}\langle x,y \rangle{\cdot}z, u \rangle\subrangle{V_\pi} &= 
\langle \pi(\prescript{}{G}\langle x,y \rangle)(z), u \rangle\subrangle{V_\pi} \\
&= d_\pi \int_G \overline{\langle \pi(s)(y), x \rangle}\subrangle{V_\pi} \langle \pi(s)(z), u \rangle_{V_\pi} \ ds\\ 
&= d_\pi \tfrac{1}{d_\pi}\langle z, y \rangle\subrangle{V_\pi} \overline{\langle u,x \rangle}\subrangle{V_\pi} \\
&=  \langle x \langle y, z \rangle\subrangle{V_\pi}, u \rangle\subrangle{V_\pi}
\end{align*}
for every $x,y,z,u \in V_\pi$. Therefore
$$ \prescript{}{G}\langle x,y \rangle{\cdot}z= x \langle y, z \rangle\subrangle{V_\pi}$$
as required in  \eqref{imprimitivity-compatibility}.

We have shown that $V_\pi$ has the structure of a $(C^*(G),\C)$-imprimitivity bimodule. Thus the two-sided closed ideal $J_\pi$ cut out by the inner product in \eqref{left-inner-compact-group} is isomorphic to the algebra $\mathbb{K}(V_\pi)$ of compact operators on the Hilbert space $V_\pi$. As such, its dual is the singleton $\{ \pi \}$:
$$\widehat{J_\pi}=\{ \pi \}.$$  
As $V$ is finite dimensional, we have $\mathbb{K}(V_\pi) = \End(V_\pi)$ so that each $J_\pi$ is isomorphic to $\End(V_\pi)$ and the representation $\pi$ of $C^*(G)$ is surjective. Thus $J_\pi$ is complemented:
$$C^*(G) \simeq J_\pi \oplus {\rm Ker}(\pi).$$

\subsection{The decomposition} Consider the $C^*$-subalgebra $\bigoplus _{\pi \in \widehat{G}} J_\pi\subseteq C^*(G)$. Here the direct sum is in the category of $C^*$-algebras, i.e. $\bigoplus _{\pi \in \widehat{G}} J_\pi$ consists of collections $(a_\pi)_{\pi\in \widehat{G}}\in \prod_{\pi \in \widehat{G}} J_\pi$ such that $(\|a_\pi\|)_{\pi\in \widehat{G}}\in C_0( \widehat{G})$. Equivalently, $\bigoplus_{\pi \in \widehat{G}} J_\pi$ is the closure of  the algebraic direct sum of the ideals $J_\pi$. 
As $C^*(G)$ is type I, we can conclude that
\begin{equation} 
\label{decomp-compact-Cstar}
C^*(G) = \bigoplus _{\pi \in \widehat{G}} J_\pi,
\end{equation}
from the simple facts that the spectrum of $\bigoplus _{\pi \in \widehat{G}} J_\pi$ coincides with $\widehat{G}$ and $\pi(C^*(G))=\End(V_\pi)=\pi(J_\pi)$ for all $\pi\in \widehat{G}$. Recalling that each $J_\pi$ is isomorphic to $\End(V_\pi)$, the above ``internal'' decomposition leads to the ``external'' one
\begin{equation} 
\label{decomp-compact-Cstar-2} 
C^*(G) \simeq \bigoplus _{\pi \in \widehat{G}} \End(V_\pi).
\end{equation}
The decompositions \eqref{decomp-compact-Cstar} and \eqref{decomp-compact-Cstar-2} are the $C^*$-algebraic reincarnations of the Peter-Weyl theorem.

\section{Local theta correspondence (LTC)} 
\label{theta-background}

The local theta correspondence describes a correspondence between representations of two reductive group $G$ and $G'$ fitting into a dual pair, i.e. $G$ and $G'$ are realized as subgroups of a symplectic group that are each others' centralizers \cite{Gelbart,Howe-79}. To simplify our discussion of the local theta correspondence, we will only work with {\em ortho-symplectic dual pairs}. This will not lead to any loss of generality from a conceptual point of view. 

Let $F$ be a local field. We fix even\footnote{We are assuming that the quadratic space is even dimensional in order to technically simplify our exposition by avoiding the need for covering groups.} dimensional vector spaces $V',V$ over $F$, one equipped with a skew-symmetric bilinear form and the other with a symmetric bilinear form. We consider $G'$ and $G$ that form an ortho-symplectic dual pair, that is they will be groups of isometries of $V',V$ respectively, so that one is the symplectic group and the other is the orthogonal group. Associated to a non-trivial unitary character $\chi$ of $F$, there is a well-known unitary representation $\omega=\omega_\chi$ of $G'{\times} G$, called the {\bf oscillator representation}. 

The oscillator representation is known to have many different models, each with different attractive features. One popular model is the so-called {\em Schr\"odinger model}. Let us assume that we have a decomposition $V'=V'_0 \oplus V'_1$ of $V'$ into two totally isotropic subspaces of equal dimensions, this decomposition polarizes $V'$ allowing for the construction of the Schr\"odinger representation of the relevant Heisenberg group. Then $G$ acts on the vector space 
$$U:={\rm Hom}(V'_0,V)$$
by post-composition. We can realise the oscillator representation on $L^2(U)$, 
$$\omega : G' {\times} G \longrightarrow \mathcal{U}(L^2(U)),$$
such that the action of $G$ is {\em geometric}, that is, 
$$\omega(g)(\phi)(u)= \phi(g^{-1}u), \qquad \forall u \in U, g \in G, \phi \in L^2(U).$$
The action of $G'$ takes a more complicated form involving a factorization of elements into simpler components, see \cite{Follandbook, Gan-survey}. When we use a concrete model for the oscillator representation, this will be our default one. 
The action of $G'{\times} G$ preserves the space $S(U)$ of Schwartz functions, giving us the {\em smooth} oscillator representation
$$\omega^\infty : G' {\times} G \longrightarrow \mathrm{End}(S(U)).$$

\subsection{$L^2$-duality} Roger Howe proved (see \cite[Theorem  6.1]{Howe-89}) the ``double commutant' result that the actions of $G'$ and $G$ on $L^2(U)$ not only commute with each other but in fact generate each other's commutants in the sense of von Neumann algebras. More precisely, the von Neumann algebras obtained as the closures, in the weak operator topology, of the linear spans of $\omega(G')$ and $\omega(G)$ inside $\mathbb{B}(L^2(U))$ are each other's commutants. This leads to a multiplicity one decomposition of the oscillator representation as a $G' {\times} G$-representation
\begin{equation}
\label{aadadlnadlknda}
\omega \simeq \int_{\widehat{G}} {\pi}' \otimes  \pi \ d\mu(\pi),
\end{equation}
where the integral ranges over irreducible unitary representations $\pi$ of $G$, the ${\pi}'$ are irreducible unitary representations of $G'$ and $\mu$ is some Borel measure on the unitary dual $\widehat{G}$ of $G$. As things are defined up to measure zero sets with respect to $\mu$, the rule $\pi \leftrightarrow {\pi}'$ gives a true correspondence only over the atoms of $\mu$, which of course could be an empty set. The decomposition \eqref{aadadlnadlknda} however sets up a measurable correspondence from the support of $\mu$ to $\widehat{G'}$ by declaring ``$\pi \leftrightarrow {\pi}'$ if and only if ${\pi}' \otimes \pi$ is a subrepresentation of $\omega$''. See \cite[Section 6]{Howe-89}, \cite[Section 2]{Li-90} and \cite[Section 4.6]{Li-00} for more on the $L^2$-duality.

\subsection{Smooth duality} Following Howe, we will move beyond the above unitary setting by considering the smooth oscillator representation. We will also change our perspective from subrepresentations to quotient\footnote{In his unpublished 1996 notes on the local theta correspondence, Steve Kudla writes: ``\emph {Since there are many nontrivial extensions in the category of smooth or even admissible representations of $p$-adic groups, it is best to study quotients rather than summands or submodules.}''} representations. 

Given smooth irreducible representations ${\pi}', \pi$ of $G',G$ respectively, we define the rule that $\pi \leftrightarrow {\pi}'$ if and only ${\pi}' \otimes \pi$ is a quotient of the smooth oscillator representation, that is, if there is a non-trivial surjection from $\omega^\infty$ onto ${\pi}' \otimes \pi$ that is $G'{\times} G$-equivariant. In the archimedean case, we ask for the kernel of this surjection to be closed.  

Howe conjectured that the above rule gives a partial bijection between the smooth duals of $G'$ and $G$. In the archimedean case, Howe proved this conjecture (\cite{Howe-89}). In the non-archimedean case, it is known to be true by works\footnote{We are only listing works relevant to the ortho-symplectic pairs that we are focusing on. If we consider all dual pairs, we should also mention works of Minguez (\cite{Minguez}) and of Gan-Sun (\cite{Gan-Sun}).} of Waldspurger (\cite{Waldspurger-90}) and Gan-Takeda (\cite{Gan-Takeda}).  In the literature the correspondence  $\pi \leftrightarrow {\pi}'$ is usually called {\bf Howe duality}, for obvious reasons, or {\bf theta correspondence}, for its connections with theta lifting in the theory of automorphic forms. We will say that $\pi$ and $\pi'$ are {\em theta lifts} of each other and will use the notation 
${\pi}'=\theta(\pi)$ and ${\pi}=\theta(\pi')$.

We point out that the $L^2$-duality, when restricted to the discrete part of the decomposition, is compatible with the smooth duality. This is discussed in \cite[Section 2]{Li-90} and \cite[Section 4.6]{Li-00}. In the rest of the paper, we will mean smooth duality when we say duality. We also point out that $C^*$-algebras in their own right can only be used to study unitary representations, but when equipping a $C^*$-algebra with a well behaved subalgebra, as is comme il faut in noncommutative geometry, could allow for extending $C^*$-algebraic methods to the setting of smooth duality. 

\subsection{Preservation of unitarity} 
Now that the duality is described as a partial bijection between the smooth duals, it is natural to ask when unitarity is preserved under theta correspondence. It is known that unitarity is not preserved in general; for example, the trivial representation of $O(p,q)$ lifts to a non-unitary representation of ${\rm Sp}_{2n}(\R)$ when $p>n+1$, $pq\not=0$ and $p+q$ is even (see e.g. \cite[p.95]{Lee-Zhu}). There are however important cases in which it is known that unitarity is preserved. We will discuss some of these below. Readers interested in this aspect of theta corrspondence should also see Chen-Bo Zhu's contribution to these proceedings (\cite{Zhu-IHP}).

\section{Compact case of LTC} 
\label{theta-compact-case}
Assume that $G$ is compact\footnote{Thus, $G$ is necessarily the orthogonal group.}. Then all irreducible representations of $G$ are finite dimensional and unitary. It is easy to see that representations of $G$ which enter the theta correspondence lift to unitary representations of $G'$. Thus unitarity is preserved. 

Indeed, the oscillator representation $\omega$ decomposes discretely as a $G$-representation
\begin{equation} 
\label{compact-decomposition} 
\omega \simeq \sum_{\pi \in \widehat{G}} {\rm Hom}_G(\pi, \omega)  \otimes \pi.
\end{equation}
Here ${\rm Hom}_G(\pi, \omega) $ is a Hilbert space since $G$ is compact, and so $\pi$ is finite-dimensional. Indeed, since $\pi$ is finite-dimensional ${\rm Hom}_G(\pi, \omega) $ carries the standard inner-product
$$\langle S,T \rangle := {\rm Tr }(T^*S),$$
where the adjoint is taken with respect to the unitary structures on $\pi$ and $\omega$. As the actions of $G$ and $G'$ commute, $G'$ acts unitarily on the multiplicity spaces ${\rm Hom}_G(\pi, \omega)$ by post-composition and the obtained $G'$-representations are irreducible thanks to double commutation, when the multiplicty spaces are non-zero. Thus, the duality is completely determined within the $L^2$-picture: 
\begin{equation}
\label{thetadaom}
\theta(\pi) = {\rm Hom}_G(\pi, \omega).
\end{equation}

We can identify our multiplicity spaces as spaces of invariants
and using our Schr\"odinger model of the oscillator representation, we can further describe these spaces of invariants more concretely as an $L^2$-space
$${\rm Hom}_G(\pi, \omega) \simeq \left ( \omega \otimes \pi^* \right )^G  \simeq L^2_G(U,V_{\pi^*})$$
where $L^2_G(U,V_{\pi^*})$ is the space of $L^2$-functions on $U$ valued in $V_{\pi^*}$ that are $G$-equivariant.  This identification is $G'$-equivariant and gives a concrete model for $\theta(\pi)$.

We remark that when $F$ is archimedean, Kashiwara and Vergne (\cite{Kashiwara-Vergne}) determined which $\pi$ enter the theta correspondence and proved that the theta lifts $\theta(\pi)$ all have highest weight vectors, which they describe explicitly (see also \cite[Appendix]{Przebinda-96}).  In the non-archimedean case, $G$ can be compact only when the dimension $n$ of $V$ is not more than $4$. The case $n=2$, goes back to work of Shalika and Tanaka \cite{Shalika-Tanaka} and the case $n=4$ is related to the Jacquet-Langlands transfer for $GL_2$. See \cite[Section 5]{Gelbart} or in \cite[Section 4.6]{Prasad} for a discussion.

\subsection{Via coinvariants} \label{prelude-Li-method} As a toy model for the method of Jian-Shu Li, let us describe the above multiplicity spaces not as invariants, but as coinvariants. Fix an irreducible unitary representation $\pi$ of our compact group $G$ and, for convenience, let $\mathcal{Z}$ denote the tensor product Hilbert space $L^2(U)\otimes V_{\pi}$ with the diagonal action of $G$. 

Consider the averaging operator $P$ on $\mathcal{Z}$ defined by
\begin{equation} \label{averaging-projection} P(z):= \int_G g{\cdot}z \ dg,
\end{equation}
where we normalize the volume of $G$ as $1$. The operator $P$ is a projection. Its image is the subspace 
$\mathcal{Z}^G$ of $G$-fixed vectors and $\mathcal{Z}/{\rm Ker}(P)$ is the largest quotient of $\mathcal{Z}$ on which $G$ acts trivially, in other words, the coinvariant space $\mathcal{Z}_G$. Thus, the operator $P$ establishes an isomorphism
\begin{equation} \label{coinv-inv-isom}
(\omega \otimes \pi)_G \simeq (\omega \otimes \pi)^G
\end{equation}

Let $\left ({\cdot},{\cdot}\right )$ denote the inner product on $\mathcal{Z}$ and introduce the form 
\begin{equation} \label{Li-form-1} \llangle z_1,z_2 \rrangle := \left (P(z_1), z_2 \right )
\end{equation}
for $z_1,z_2 \in \mathcal{Z}$. 
As $P$ is self-adjoint, the form $\llangle {\cdot},{\cdot} \rrangle$ is Hermitian. It is also non-negative since
$$\llangle z,z \rrangle =( P(z),z ) = ( P^2(z),z ) = ( P(z),P(z) ) \geq 0.$$
Moreover, as $\left ({\cdot},{\cdot}\right )$ is non-degenerate, the radical 
$ \{ z \in \mathcal{Z} \mid \llangle z,w \rrangle=0 \quad \forall w \in \mathcal{Z} \}$
of $\llangle {\cdot},{\cdot} \rrangle$ equals the kernel of $P$. Therefore the form $\llangle {\cdot},{\cdot} \rrangle$ descends to an inner product on $(\omega \otimes \pi)_G$. As the operator $P$ commutes with the $G'$-action, the form $\llangle {\cdot},{\cdot} \rrangle$ is $G'$-invariant. Moreover, the action of $G'$ descends to $(\omega \otimes \pi)_G$ and the isomorphism \eqref{coinv-inv-isom} is $G'$-equivariant. Therefore the $G'$-representation on 
$(\omega \otimes \pi)_G$ captures $\theta(\pi^*)$.

\section{Method of Li and preservation of unitarity}
\label{sec:liswork}
In \cite{Li-89}, Jian-Shu Li approached theta lifting with a strategy that had important applications to the preservation of unitarity question. As a prelude, we have presented this method in Section \ref{prelude-Li-method} where compactness of the group removed all the technical challenges. We will now discuss the method in full generality. 

Let us fix a dual pair $G',G$ with $G$ the smaller group\footnote{For size, we simply compare the dimensions of $V'$ and $V$. In the case that they are equal, we will say that the orthogonal group is smaller. More conceptually, we can compare the size of the dual groups; the dual of ${\rm Sp}_{2n}$ is ${\rm SO}_{2n+1}$ and the dual of ${\rm O}_{2n}$ is ${\rm SO}_{2n}$. }.  Given an irreducible unitary representation $(\pi,V_\pi)$ of $G$, we consider the algebraic tensor product $S(U)\otimes V_\pi^\infty$ where 
$S(U)$ is the Schwartz subspace of $L^2(U)$ and $V_\pi^\infty$ is the subspace of smooth vectors in $V_\pi$. 

We equip $S(U)\otimes V_\pi^\infty$ with the form
\begin{equation} \label{Li-form-2} (\phi_1 \otimes v_1, \phi_2 \otimes v_2)_\pi :=
\int_G \langle \phi_1,\omega(s)(\phi_2) \rangle \langle  v_1,\pi(s)(v_2)\rangle \mathrm{d}s.
\end{equation}
Of course, this is just our form \eqref{Li-form-1} but in this generality, we do not know whether this integral is convergent. Let us assume that it is. It is easy to see that this form is Hermitian and $G'$-invariant with respect to the natural action $\omega \otimes {\bf 1}$ of $G'$.  Let $\mathcal{R}_{\pi}$ denote the radical of $({\cdot},{\cdot})_\pi$. Then $\mathcal{R}_{\pi}$ is stabilised by $G'$ and thus the quotient space  
$$\left ( \mathbb{S} \otimes V_{\pi}^\infty \right )/\mathcal{R}_{\pi}$$
affords a $G'$-representation that we will denote by $L(\pi)$. Assuming furthermore that \eqref{Li-form-2} is non-negative, we obtain a positive-definite $G'$-invariant form on $L(\pi)$. 

A well-known argument based on the description of the theta lift as the maximal semisimple quotient of  the so-called `big theta lift' leads to the following result. 
\begin{proposition} \label{Li=theta} Assume that the integral in \eqref{Li-form-2} is convergent and the form it defines is non-negative. If the unitary $G'$-representation $L(\pi)$ is nonzero, then it is isomorphic to $\theta(\pi^*)$. 
\end{proposition}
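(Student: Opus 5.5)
The plan is to realise $L(\pi)$ as a quotient of $\omega^\infty\otimes\pi^{*}$ (up to passing to the dual), show that the quotient map factors through the big theta lift, and then use Howe duality to identify it with $\theta(\pi^*)$.

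First I construct a nonzero $G'\times G$-equivariant map out of $\omega^\infty$. For $\phi\in S(U)$ and $v_2\in V_\pi^\infty$, consider the vector
$$\Phi(\phi)(\,\cdot\,)=\int_G \langle \phi,\omega(s)(\,\cdot\,)\rangle\,\pi(s)v_2\,\mathrm{d}s ,$$
or more symmetrically the map
$$\Theta:\ S(U)\otimes V_\pi^\infty \longrightarrow L(\pi),\qquad \phi\otimes v\mapsto [\phi\otimes v].$$
The key observation is that the form \eqref{Li-form-2} is $G$-biinvariant in the sense that $(\omega(g)\phi_1\otimes\pi(g)v_1,\phi_2\otimes v_2)_\pi=(\phi_1\otimes v_1,\phi_2\otimes v_2)_\pi$. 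This follows from a change of variables $s\mapsto g^{-1}s$ in the convergent integral, using unimodularity of $G$. Consequently $(\omega(g)\otimes\pi(g))z-z\in\mathcal R_\pi$ for all $z$, so the form factors through the $G$-coinvariants $(\omega^\infty\otimes\pi^\infty)_G$. This mirrors Section \ref{prelude-Li-method}.

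Since $G'$ commutes with $G$ and preserves the form, $\Theta$ is a $G'$-equivariant surjection $(\omega^\infty\otimes \pi)_G\to L(\pi)$. By standard Frobenius reciprocity, $(\omega^\infty\otimes\pi)_G\cong \omega^\infty\otimes_{G}\pi$ is precisely the maximal $\pi^*$-isotypic quotient of $\omega^\infty$, i.e.\ the big theta lift $\Theta(\pi^*)$. Hence $L(\pi)$, on its smooth vectors, is a nonzero $G'$-quotient of $\Theta(\pi^*)$. Equivalently, $\omega^\infty\to L(\pi)\otimes\pi^*$ is a nonzero $G'\times G$-equivariant map.

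It remains to show that $L(\pi)$ is irreducible, or at least semisimple with a unique constituent. Because $L(\pi)$ carries a positive-definite $G'$-invariant inner product, it is a unitarizable, hence completely reducible, quotient of $\Theta(\pi^*)$ (in the archimedean case after passing to Harish-Chandra modules, where closedness of the kernel is automatic from continuity of the form). Every irreducible summand $\sigma'$ of $L(\pi)$ gives a surjection $\omega^\infty\to\sigma'\otimes\pi^*$. Howe duality, in the form that $\Theta(\pi^*)$ has a unique irreducible quotient $\theta(\pi^*)$ (Howe; Waldspurger; Gan--Takeda), then forces every summand to be $\theta(\pi^*)$. Multiplicity one follows from the same uniqueness: the maximal semisimple quotient of $\Theta(\pi^*)$ is $\theta(\pi^*)$ itself, not a multiple of it. Hence $L(\pi)\cong\theta(\pi^*)$.

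I expect the main obstacle to be this last step. Passing from "unitary quotient of the big theta lift" to "isomorphic to $\theta(\pi^*)$" requires care. One must make sure the Hilbert completion of $L(\pi)$ does not acquire extra constituents, and that its smooth vectors are really a quotient of $\Theta(\pi^*)$ in the appropriate topological sense. I would handle this by working at the level of smooth vectors and admissibility, using finiteness of $\Theta(\pi^*)$ as a representation and the fact that the maximal semisimple quotient statement of Howe duality applies to unitarizable quotients.
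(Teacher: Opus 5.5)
Your proposal is correct and follows the ``well-known argument'' the paper cites without giving details: the radical of Li's form contains every vector $(\omega\otimes\pi)(g)z-z$, so $L(\pi)$ is a nonzero unitarizable quotient of the big theta lift $\Theta(\pi^*)\cong(\omega^\infty\otimes\pi)_G$, and it is therefore isomorphic to the irreducible maximal semisimple quotient $\theta(\pi^*)$. As you note at the end, unitarizability alone does not give complete reducibility; it is the finite length (and admissibility) inherited from $\Theta(\pi^*)$, together with the usual passage to Harish-Chandra modules or Hausdorff coinvariants in the archimedean case, that makes the semisimplicity step work, and the paper leaves these points implicit as well.
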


\subsection{Stable range case} \label{stable-range-case}
Another important case in which unitary is preserved is the stable range case. This is the case where $V$ has a  maximal isotropic subspace whose dimension is greater than or equal to the dimension of $V'$, or vice versa. Thus, if $F$ is real, then the stable range pairs are $({\rm Sp}_{2n}(\R),O(p,q))$ with $n\geq p+q$ or $2n \leq {\rm min}\{p,q\}$. In the stable range setting, Jian-Shu Li proved in \cite{Li-89} that every unitary representation of the smaller group enters the theta correspondence and that their theta lifts are unitary as well. 

\begin{theorem}
\label{Li-unitary}
(Li \cite{Li-89})  Let $(G',G)$ be a stable range dual pair with $G$ the smaller member. Assume that $(G',G)$ is not the pair $(O_{2n,2n},Sp_{2n})$. Let $\pi$ be an irreducible unitary representation of $G$. Then
\begin{enumerate} 
\item the integral $\eqref{Li-form-2}$  is convergent and the form $({\cdot}, {\cdot})_\pi$ is non-negative,
\item $L(\pi)$ is nonzero.
\end{enumerate}
Therefore, by Prop. \ref{Li=theta}, all of the unitary dual of $G$ enters the theta correspondence and their lifts are also unitary.
\end{theorem}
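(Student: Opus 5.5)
The plan has three parts, in order: (1) absolute convergence of \eqref{Li-form-2} from matrix-coefficient decay, (2) non-negativity by writing the form as a limit of manifestly positive expressions, and (3) non-vanishing of $L(\pi)$ via a mixed model adapted to the isotropic subspace.

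\textbf{Convergence.} Take $\phi_1,\phi_2\in S(U)$ and restrict $\omega$ to $G$. In the stable range, $V'$ has a maximal isotropic subspace $X$ with $\dim X\ge \dim V$ (using the paper's convention that $G$ is the smaller group). Using a mixed model of $\omega$ adapted to $X$, I would show that the matrix coefficients $g\mapsto\langle\phi_1,\omega(g)\phi_2\rangle$ on $G$ satisfy
\[
|\langle\phi_1,\omega(g)\phi_2\rangle|\le C_{\phi_1,\phi_2}\,\Xi_G(g)^{N}\,(1+\sigma(g))^{-M},
\]
where $\Xi_G$ is Harish-Chandra's spherical function. The exponent $N$ grows with $\dim X$, and the stable range condition is exactly what makes $N>2$. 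Concretely, via the Cartan decomposition $G=KA^+K$, I would bound $\omega(a)\phi$ for $a\in A^+$. The action of $a$ on the relevant coordinates of $U$ is a dilation, so the coefficient is bounded by a product of $|a_i|^{-\dim X/2}$-type factors. Comparing this with $\delta_P(a)^{1/2}$ should give decay beyond $L^{2}$.

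Next, the matrix coefficients of any irreducible unitary $\pi$ are bounded by $\|v_1\|\|v_2\|$. So it suffices that the $\omega$-coefficients lie in $L^1(G)$, i.e.\ that they are ``$L^{1}$'', which should hold once $N>2$. The excluded pair $(O_{2n,2n},Sp_{2n})$ is the boundary case where the exponent is borderline and integrability fails. For this reason I would track the exponents carefully rather than aim only for a qualitative estimate.

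\textbf{Non-negativity.} Since $\omega|_G$ has integrable coefficients on the dense subspace $S(U)$, it is weakly contained in the regular representation; in fact it is a subrepresentation of a multiple of $L^2(G)$. I would realise $(\cdot,\cdot)_\pi$ as a limit of manifestly positive expressions. Take an increasing exhaustion $G=\bigcup_j K_j$ with $\mathrm{vol}(K_j)<\infty$, and for $z=\sum\phi_i\otimes v_i$ consider
\[
\frac{1}{\mathrm{vol}(K_j)}\Big\|\int_{K_j}(\omega\otimes\pi)(g)z\,dg\Big\|^2\ge0 .
\]
Showing that these converge to $(z,z)_\pi$ is the delicate part. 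A cleaner alternative is to note that $g\mapsto\langle z,(\omega\otimes\pi)(g)z\rangle$ is a positive-definite integrable function on $G$, hence its integral equals its value under the trivial character. That value is $\ge0$, because an integrable positive-definite function $f$ satisfies $\int f=\lambda(\mathbf 1)$ for the integrated form, which is a limit of $\langle f*h,h\rangle$ for suitable $h$, and these are non-negative. I would use this $L^1$ positive-definite argument (Godement-type). This is where I expect the real obstacle: justifying the passage to the limit, particularly when $G$ is non-amenable and the trivial representation is not weakly contained in the regular one. I would try first the Li trick of writing $\omega|_G$ as a subrepresentation of $L^2(G)\otimes\mathcal H$ (via the mixed model) and computing the form as $\|\int\ldots\|^2$ explicitly.

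\textbf{Non-vanishing.} In the mixed model, $\omega|_G$ contains $\mathrm{Ind}$-like pieces isomorphic to $L^2(G)\otimes\mathcal H$ coming from the free $G$-orbit in $\mathrm{Hom}(X^*,V)$ of injective maps. The form $(\cdot,\cdot)_\pi$ then becomes
\[
\int_G \langle f_1,\lambda(g)f_2\rangle\langle v_1,\pi(g)v_2\rangle\,dg
\]
for compactly supported smooth $f_i$, which is $\langle \pi(\bar f)v,v'\rangle$-like. Choosing $f$ with $\pi(f)\ne0$ (possible since $\pi$ is nonzero and $C_c^\infty$ acts nondegenerately) gives a vector of positive norm, so $L(\pi)\neq0$. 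Proposition \ref{Li=theta} then yields the final assertion.
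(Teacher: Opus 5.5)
Since the survey only cites Li for this theorem, I am comparing your proposal with the standard argument. Your convergence step (decay of $\omega|_G$ along $KA^+K$, with the exponent exactly critical for the excluded pair) and your non-vanishing step are right in outline. The non-negativity step, however, has a genuine gap.

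\textbf{The principle you rely on is false here.} You claim that an integrable continuous positive-definite $F$ has $\int_G F\ge0$, because $\int_G F$ is a limit of $\langle F*h,h\rangle$. That is precisely amenability (Reiter's condition). It fails for non-amenable $G$, which is the typical case here ($Sp_{2n}(\R)$, or $O(p,q)$ with $p,q\ge1$, $p+q\ge3$). For instance, on $F_2=\langle a,b\rangle$ let $\mu$ be the uniform probability measure on $\{a^{\pm1},b^{\pm1}\}$ and set $F=\tfrac34\delta_e-\mu*\mu$. By Kesten, $\|\lambda(\mu)\|=\sqrt3/2$, so $\lambda(F)=\tfrac34-\lambda(\mu)^2\ge0$ and $F$ is a finitely supported positive-definite function. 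Yet $\sum_g F(g)=-\tfrac14$, and a smoothed version of this works on any non-amenable group.

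Your exhaustion variant fails for the same reason. With $F(x)=\langle z,(\omega\otimes\pi)(x)z\rangle$ one has $\mathrm{vol}(K_j)^{-1}\|\int_{K_j}(\omega\otimes\pi)(g)z\,dg\|^2=\int_G F(x)\,\mathrm{vol}(K_j\cap K_jx^{-1})\,\mathrm{vol}(K_j)^{-1}\,dx$. You would need these ratios to tend to $1$, which again forces amenability.

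Your fallback, embedding $\omega|_G$ in $L^2(G)\otimes\mathcal H$, points in the right direction, but as stated it is not enough either. The $F$ above lies in $\ell^2$, so it is an integrable coefficient of $\lambda$ itself. Positivity therefore depends on \emph{which} vectors you use, not merely on the embedding.

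\textbf{What makes positivity work.} You need the geometry you already invoke in step three, together with an \emph{absolute} version of the step-one estimate. In the mixed model, $(\omega(g)\phi)(u)=\omega_0(g)\phi(g^{-1}u)$ for $\mathcal H_0$-valued $\phi$ on $\mathrm{Hom}(X^*,V)$. Here $\omega_0$ is the oscillator representation attached to the anisotropic kernel of $V'$, and it is absent if $V'$ is split.

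Let $\Omega$ be the open conull set of \emph{surjective} $u$. (Not injective: $\dim X\ge\dim V$.) $G$ acts freely and properly on $\Omega$, preserving Lebesgue measure, so $\int_\Omega\Phi(u)\,du=\int_{G\backslash\Omega}\int_G\Phi(hx)\,dh\,d\dot x$.

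Suppose each $\phi_i$ vanishes for $u$ outside a compact subset of $\Omega$. Then all coefficients are compactly supported in $g$. Substituting $g=hk^{-1}$ gives, for $z=\sum_i\phi_i\otimes v_i$,
\[
(z,z)_\pi=\int_{G\backslash\Omega}\Big\|\sum_i\int_G\omega_0(h)^{-1}\phi_i(hx)\otimes\pi(h)^{-1}v_i\,dh\Big\|^2\,d\dot x\;\ge\;0 .
\]

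For general Schwartz $\phi_i$, apply this to $\chi\phi_i$ with cut-offs $\chi\in C_c^\infty(\Omega)$ increasing to $1$. Then let $\chi\to1$ by dominated convergence on $G\times\Omega$. This requires
\[
\int_G\int_\Omega|\langle\phi_i(u),\omega_0(g)\phi_j(g^{-1}u)\rangle|\,du\,dg<\infty,
\]
at least on a dense subspace, after which you extend by continuity of the form. That condition is stronger than integrability of $g\mapsto\langle\phi_i,\omega(g)\phi_j\rangle$, and it is the estimate your step one must be set up to prove.

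With it, step two needs no amenability-type input. The same displayed formula, applied with $\phi$ supported near a single point of $\Omega$, also gives the non-vanishing in step three.
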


\subsection{Tempered case} \label{tempered-case}
More generally, Li's method can be used to show that if a tempered unitary representation of the smaller group enters the theta correspondence, then it lifts to a unitary representation.

\begin{theorem} \label{tempered-reps} Let $(G',G)$ be a dual pair with $G$ the smaller member. Let $\pi$ be a tempered irreducible representation of $G$. Then 
\begin{enumerate}
\item the integral $\eqref{Li-form-2}$  is convergent and the form $({\cdot}, {\cdot})_\pi$ is non-negative,
\item $L(\pi)$ is non-zero if and only if $\pi$ enters the theta correspondence.
\end{enumerate} 
Therefore, by \eqref{Li=theta}, the tempered representations of $G$ entering the theta correspondence lift to unitary representations.
\end{theorem}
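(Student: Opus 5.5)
The proof would follow Li's strategy and has two parts: convergence with positivity, then nonvanishing.

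For convergence, I would use the decay of matrix coefficients. For $\phi_1,\phi_2\in S(U)$, the coefficient $s\mapsto\langle\phi_1,\omega(s)\phi_2\rangle$ restricted to the smaller group $G$ is bounded by a multiple of $\Xi_G^{a}$, where $\Xi_G$ is Harish-Chandra's spherical function and $a$ depends on the relative sizes of $V,V'$. This estimate comes from Howe's and Li's bounds on oscillator coefficients via the Cartan decomposition $G=KA^+K$. Because $\pi$ is tempered, its $K$-finite (or smooth) coefficients satisfy $|\langle v_1,\pi(s)v_2\rangle|\le C\,\Xi_G(s)(1+\sigma(s))^{N}$. The product of the two bounds is integrable over $G$ provided $a>0$ and the polynomial factor is absorbed. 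This requires checking, from the explicit exponent in the oscillator estimate, that $G$ being the smaller member gives strictly positive extra decay, so that $\int_{A^+}\Xi^{1+a}\delta<\infty$. I expect this to be the main technical obstacle: it is a careful bookkeeping on $A^+$ of the modular factor $\delta$ against the exponents, case by case over the possible dual pairs.

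For positivity, temperedness means $\pi$ is weakly contained in the regular representation $\lambda_G$. Hence the coefficient $\langle v,\pi(s)v\rangle$ is a uniform-on-compacta limit of sums of functions $\langle f,\lambda_G(s)f\rangle$ with $f\in C_c(G)$. For such a coefficient the form is
$$\int_G\langle\phi,\omega(s)\phi\rangle\langle f,\lambda(s)f\rangle\,ds=\Big\|\int_G f(g)\,\omega(g)\phi\,dg\Big\|^2\ge0$$
(up to conjugation and conventions). By the convergence estimate of the previous paragraph, dominated convergence lets me pass to the limit, so $(x,x)_\pi\ge0$ for elementary tensors $x$. For general elements of $S(U)\otimes V_\pi^\infty$ I would apply the same argument to $\pi$ tensored with a finite-dimensional trivial representation, or equivalently use vector-valued coefficients.

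For nonvanishing, Proposition \ref{Li=theta} gives one direction: if $L(\pi)\neq0$, then $\theta(\pi^*)$ exists, so $\pi$ enters the correspondence (up to contragredient, and the tempered class is stable under it). For the converse, suppose $\pi$ has a nonzero theta lift. Then there is a nonzero $G'\times G$-map $\omega^\infty\to\theta(\pi)\otimes\pi$, which yields a nonzero $G$-invariant functional on $S(U)\otimes V_{\pi^*}^\infty$ of a specific form. I would show the form $(\cdot,\cdot)_\pi$ is not identically zero by realising it as an integral $\int_G$ of matrix coefficients and using that tempered $\pi$ occurring as a quotient forces some coefficient of $\omega\otimes\pi$ to have nonzero integral. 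The natural tool is the local Rallis inner product / doubling see-saw identification of $(\cdot,\cdot)_\pi$ with a zeta integral, but I would try the simpler route first: the invariant functional pairs against the integral form through Schur-type uniqueness, exactly as in the compact toy model of Section \ref{prelude-Li-method}, with $P$ replaced by the convergent integral.
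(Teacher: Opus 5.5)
\paragraph{Main gap: the converse in (2).} The substantive gap is the converse half of (2): that a tempered $\pi$ which occurs in the correspondence has $L(\pi)\neq 0$. The compact-case mechanism you plan to imitate does not transfer. In Section~\ref{prelude-Li-method} nonvanishing is automatic because $P=\int_G(\omega\otimes\pi)(s)\,ds$ is an honest projection, so every $G$-invariant functional $\ell$ satisfies $\ell=\ell\circ P$.

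For noncompact $G$, $\omega\otimes\pi$ has no nonzero $G$-fixed vectors, and $\int_G(\omega\otimes\pi)(s)\,ds$ is not an operator at all. Knowing that $\mathrm{Hom}_G(\omega^\infty\otimes\pi^\infty,\C)\neq 0$ says nothing about whether the particular invariant functionals $y\mapsto (x,y)_\pi$ vanish identically; that space is essentially the dual of a big theta lift, not a line. Howe-duality uniqueness only identifies $L(\pi)$ once it is known to be nonzero, which is precisely Proposition~\ref{Li=theta}.

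You need an additional input that links abstract occurrence to this specific integral. For discrete series one can use that $\pi$ is a direct summand of $L^2(G)$, so the $\pi$-isotypic projection of $\omega|_G$ plays the role of $P$ (Li's case). For general tempered $\pi$, a route that works is the one you only mention. Through the doubling see-saw, $(\cdot,\cdot)_\pi$ becomes a local doubling zeta integral at a specific point $s_0$, evaluated on the image of the Rallis map $\omega\otimes\bar\omega\to I(s_0)$. Its nonvanishing exactly when $\pi$ occurs then comes from the structure of this degenerate principal series, as in Gan--Ichino's work on the local Rallis inner product formula. As it stands, (2) is not proved.

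In the easy direction, note also that Proposition~\ref{Li=theta} gives occurrence of $\pi^*$, not of $\pi$. To pass to $\pi$ you need that occurrence is stable under contragredient (the MVW involution). Stability of temperedness under contragredient is not the relevant fact.

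\paragraph{Positivity and convergence.} Your positivity step also has a hole, and the convergence criterion is misstated. Weak containment only gives approximants $\psi_k=\sum_j\langle f^{(k)}_j,\lambda(\cdot)f^{(k)}_j\rangle$ converging to $\langle v,\pi(\cdot)v\rangle$ uniformly on compacta. Your first step bounds the coefficients of $\pi$, not the $\psi_k$. The automatic bound $|\psi_k|\le\psi_k(e)$ does not help, because outside the stable range $\langle\phi,\omega(\cdot)\phi\rangle$ is in general not integrable. So dominated convergence is not justified as stated.

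One repair within your approach is to project the $f^{(k)}_j$ onto the finitely many $K$-types of a $K$-finite $v$. The Cowling--Haagerup--Howe bound $|\langle u,\lambda(s)u\rangle|\le C_F\|u\|^2\,\Xi_G(s)$ then gives a uniform majorant $C\,\Xi_G$.

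It is cleaner to move the limit to the $C^*$-level, using the identity $\langle v,\pi(\langle\phi,\phi\rangle_G)v\rangle=(\phi\otimes v,\phi\otimes v)_\pi$ from Section~\ref{bimodule-construction}:
\begin{itemize}
\item For $h=\langle\phi,\phi\rangle_G$ and $f\in C_c(G)$, everything is compactly supported, and your computation gives rigorously $\langle f,\lambda(h)f\rangle=\|\int_G f(t)\,\omega(t^{-1})\phi\,dt\|^2\ge 0$. Hence $h\ge0$ in $C^*_r(G)$.
\item Every tempered $\pi$ factors through $C^*_r(G)$, so $\pi(h)\ge 0$.
\item Finally, $\langle v,\pi(h)v\rangle=\int_G h(s)\langle v,\pi(s)v\rangle\,ds$ follows by truncating $h$. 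Dominated convergence is now legitimate by (1), and the truncations converge in $C^*_r$-norm, e.g.\ by Herz's majorization principle.
\end{itemize}

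On convergence: $\Xi_G\in L^{2+\epsilon}(G)$ but $\Xi_G\notin L^2(G)$. So $\Xi_G^{1+a}(1+\sigma)^N$ is integrable only for $a>1$, not for $a>0$. What ``$G$ smaller'' must deliver is a bound of the oscillator coefficients by $\Xi_G^{1+\epsilon}$, or membership in Harish-Chandra's Schwartz space. This is indeed what the bookkeeping on $A^+$ gives.
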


This is well-known among specialists. We refer to \cite[Thm. 3.4]{Goffeng-Mesland-Sengun} for a proof. We note that for the special case of discrete series, the above result is proven in \cite[Proposition 2.4]{Li-90}.

\section{Compact case of LTC via group $C^*$-algebras}
\label{compaclatatd}
Let us now revisit the compact case theta correspondence discussed in Section \ref{theta-compact-case} from the perspective of group $C^*$-algebras and Hilbert $C^*$-modules. In a nutshell, we will build a $(C^*(G'),C^*(G))$-correspondence from the oscillator representation whose associated induction functor will ``capture'' theta correspondence in a way which could be seen as a `group algebra' version of the approach in Section \ref{prelude-Li-method}.

Recall from Section \ref{theta-background} that we can realize the oscillator representation on some Hilbert space of the form $L^2(U)$ with the smooth oscillator representation captured on the subspace $S(U)$ of Schwartz functions on $U$.

\subsection{Hilbert module over $C^*(G)$: first method}  We will build our $C^*(G)$ by defining modules over the local components of $C^*(G)$ and then assembling those local modules. Let $L^2(U)_\pi$ denote the $\pi$-isotypical component of subspace of $L^2(U)$ so that we have
$$L^2(U) \simeq \bigoplus_{\pi \in \widehat{G}} L^2(U)_{\pi^*}.$$
Here $\pi^*$ denotes the contragradient representation of $\pi$, i.e. $\pi^*$ acts on $V_\pi^*$ via the dual action. It will soon become apparent why we use the isotopical components with respect to contragradient representations, but in short it relates to Example \ref{first-example}. Each isotypical component $L^2(U)_{\pi^*}$  decomposes as
$$L^2(U)_{\pi^*} \simeq {\rm Hom}_G(V_\pi^*,L^2(U)) \otimes V_\pi^*,$$
with $G$ acting as ${\bf 1}\otimes \pi^*$ on the right hand side. As we discussed in Section  \ref{first-example}, $V_\pi^*$ has the structure of a right Hilbert module over $\End(V_\pi)$ and ${\rm Hom}_G(V_\pi^*,L^2(U))$ is a Hilbert space. This allows us to view the tensor product ${\rm Hom}_G(V_\pi^*,L^2(U)) \otimes V_\pi^*$ as an external Hilbert module tensor product, hence $L^2(U)_{\pi^*}$ is a Hilbert module over $\End(V_\pi)$ in the obvious way. We can view $\End(V_\pi)\cong J_\pi$ as an ideal in $C^*(G)$, so $L^2(U)_{\pi^*}$ forms a  Hilbert $C^*$-module over $C^*(G)$.  We use the notation $X_\pi$ for the right Hilbert $C^*$-module $L^2(U)_{\pi^*}$ over $C^*(G)$. Bundling these modules together, we define the right $C^*(G)$-Hilbert $C^*$-module
$$X:=\bigoplus_{\pi \in \widehat{G}} X_\pi$$
where the right hand side is the \emph{$\ell^2$-direct sum} of Hilbert $C^*$-modules
$$\left \{ (\phi_\pi)_{\pi} \in \prod_{\pi\in \widehat{G}}  X_\pi \mid \sum_{\pi\in \widehat{G}}  \langle\phi_\pi,\phi_\pi\rangle_{X_\pi} < \infty \right \}.$$
We note that since $G$ is compact, there is a dense $C^*(G)$-linear inclusion $L^2(U)\hookrightarrow X$ as a proper subspace.

\subsection{Hilbert module over $C^*(G)$: second method} \label{hilbert-module-compact-group-2} Alternatively, we can get our Hilbert module over $C^*(G)$ in a direct manner, without appealing to the decomposition of $C^*(G)$. It takes more effort but it has the advantage of being applicable to more general settings as we will see later.

Let $C^\infty(G)$ be the convolution algebra of smooth functions on $G$, viewed as a dense subalgebra of $C^*(G)$. 
We view $S(U)$ as a right $C^\infty(G)$-module:
$$\phi{\cdot}a := \int_G a(s) \omega(s^{-1})(\phi) ds, \quad \phi \in S(U), \ a \in C^\infty(G).$$
As in \eqref{left-inner-compact-group}, we form the matrix coefficient map $S(U) \times S(U) \to C^\infty(G)$ given by
$$\langle \phi_1, \phi_2 \rangle\subrangle{G}(s) := \langle \phi_1, \omega(s)(\phi_2) \rangle\subrangle{L^2(U)}.$$
This gives us a right linear, Hermitian $C^*(G)$-valued form on $S(U)$ which is compatible with the right action of $C^\infty(G)$. 

Next, we need to argue that given $\phi \in S(U)$, the element $\langle \phi, \phi \rangle\subrangle{G}\in C^*(G)$ is positive. Equivalently, for any unitary irreducible representation $(\pi,V_\pi)$ of $G$, we need to show that $\pi(\langle \phi, \phi \rangle\subrangle{G})$ is a positive operator on $V_\pi$. Given $v \in V_\pi$, we have
\begin{align} \label{localisation-IP-averaging} \langle v, \pi(\langle \phi, \phi \rangle \subrangle{G})(v) \rangle \subrangle{V_\pi} &=
\int_G \langle \phi, \omega(s)(\phi)\rangle \subrangle{L^2(U)} \langle v,\pi(s)(v) \rangle\subrangle{V_\pi}   \ ds \\
&= \int_G \langle \phi \otimes v, (\omega \otimes \pi)(s)(\phi \otimes v) \rangle\subrangle{L^2(U)\otimes V_\pi}  \ ds \\
&=  \langle \phi \otimes v, P(\phi \otimes v) \rangle\subrangle{L^2(U)\otimes V_\pi}  
\end{align}
where $P$ is the averaging operator
$$P(\phi \otimes v):=\int_G (\omega \otimes \pi)(s)(\phi \otimes v) \ ds $$
on the Hilbert space $L^2(U)\otimes V_\pi$ that we encountered earlier in \eqref{averaging-projection}. As discussed in Section \ref{prelude-Li-method}, $P$ is a projection and hence 
$$ \langle \phi \otimes v, P(\phi \otimes v) \rangle\subrangle{L^2(U)\otimes V_\pi}  \geq 0,$$
proving that $\langle \phi, \phi \rangle\subrangle{G}$ is a positive element. It follows quickly along the same lines that 
$\langle \phi, \phi \rangle\subrangle{G}$ is 0 if and only if $\phi$ is 0. 

Standard arguments now tell us that the completion $X_S$ of $S(U)$ with respect to the norm arising from the inner product 
$\langle {\cdot},{\cdot} \rangle\subrangle{G}$ acquires a Hilbert module structure over $C^*(G)$ extending the $C^\infty(G)$-module structure and the inner product on $S(U)$.

\subsection{More on the the $C^*(G)$-Hilbert modules $X$ and $X_S$} Put $S(U)_{\pi}$ for $S(U) \cap L^2(U)_{\pi^*}$. The matrix coefficients $\langle S(U)_\pi, S(U)_\pi \rangle\subrangle{G}$ land in the dense subalgebra $J_\pi \cap C^\infty(G)$ of $J_\pi$ and $S(U)_\pi$ is a module over  $J_\pi \cap C^\infty(G)$ as well. Thus we can complete $S(U)_\pi$ to a Hilbert module $X_{S,\pi}$ over $J_\pi$. The decomposition of $C^*(G)$ given in \eqref{decomp-compact-Cstar} leads to the decomposition of $C^*(G)$-Hilbert $C^*$-modules.
$$X_S \simeq \bigoplus_{\pi \in \widehat{G}} X_{S,\pi}.$$
A short approximation argument shows that $S(U)_\pi\subseteq L^2(U)_{\pi^*}$ is dense, and therefore we have that 
$$X\cong X_S,$$ 
as right $C^*(G)$-Hilbert $C^*$-modules. In fact, we have that
\begin{align}
\label{localsinadinacom}
X\otimes_{C^*(G)}V_{\pi} & = X\otimes_{J_\pi}V_{\pi}  =X_\pi\otimes_{J_\pi}V_{\pi}\\
\nonumber
=&{\rm Hom}_G(V_\pi^*,L^2(U)) \otimes V_\pi^*\otimes_{\End(V_\pi)}V_{\pi}={\rm Hom}_G(V_\pi^*,L^2(U)).
\end{align}
Here we use that $V_\pi^*\otimes_{\End(V_\pi)}V_{\pi}\cong \C$. 

\subsection{Promotion to a $(C^*(G'),C^*(G))$-correspondence} The action of $G'$ on $S(U)$ gives rise to unitary Hilbert $C^*(G)$-module operators on $X$. Standard arguments then show that the action of $G'$ on $S(U)$, just like in the Hilbert space case, `integrates' to a representation
$$C^*(G') \to \End^*_{C^*(G)}(X),$$
turning $X$ into a $(C^*(G'),C^*(G))$-correspondence. Alternatively, we know that $G'$ acts on $X_\pi={\rm Hom}_G(V_\pi,L^2(U)) \otimes V_\pi$ via the first factor, making $X_\pi$ a $\left ( C^*(G'),\End(V_\pi) \right )$-correspondence. The $\ell^2$-direct sum of these correspondences makes $X$ a $\left (C^*(G'),C^*(G)\right )$-correspondence. In any case, the key feature of this correspondence is that the associated induction functor is, not surprisingly, is directly related to theta correspondence.

\begin{theorem} 
Let $G$ be compact and $X$ be the $(C^*(G'),C^*(G))$-correspondence built from the oscillator representation as above. Let $\pi$ be an irreducible unitary representation of $G$. Then we have 
\begin{equation} 
\label{theta-induction} 
{\rm Ind}_{C^*(G)}^{C^*(G')}(X)(\pi)\simeq \theta(\pi^*).
\end{equation}
\end{theorem}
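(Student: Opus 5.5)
The plan is to compute the induced representation directly from the localisation identity \eqref{localsinadinacom}. I will then check that the $G'$-action produced by Rieffel induction agrees with the $G'$-action on the multiplicity space that defines the theta lift.

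First, I fix $\pi\in\widehat{G}$ and use the decomposition \eqref{decomp-compact-Cstar}, $C^*(G)=\bigoplus_\sigma J_\sigma$. The integrated representation $\pi$ annihilates $J_\sigma$ for every $\sigma\neq\pi$. Consequently, in the internal tensor product the summands $X_\sigma\otimes_{C^*(G)}V_\pi$ with $\sigma\neq\pi$ vanish. To see this, take $x\in X_\sigma$. Then $\langle x,x\rangle\in J_\sigma$, so the inner product \eqref{localisation-inner-product} of $x\otimes v$ with itself is $\langle v,\pi(\langle x,x\rangle)v\rangle=0$, and $x\otimes v$ lies in the null space $N$. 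Hence $X\otimes_{C^*(G)}V_\pi=X_\pi\otimes_{J_\pi}V_\pi$.

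Second, I write $X_\pi={\rm Hom}_G(V_\pi^*,L^2(U))\otimes V_\pi^*$, an external tensor product of a Hilbert space with the $\End(V_\pi)$-module of Example \ref{first-example}. Associativity of internal and external tensor products gives $X_\pi\otimes_{\End(V_\pi)}V_\pi\cong {\rm Hom}_G(V_\pi^*,L^2(U))\otimes(V_\pi^*\otimes_{\End(V_\pi)}V_\pi)$. The key computation is $V_\pi^*\otimes_{\End(V_\pi)}V_\pi\cong\C$. For this I use the evaluation pairing $f\otimes v\mapsto f(v)$. By construction of the $\End(V_\pi)$-valued inner product, $\langle f\otimes v,f'\otimes v'\rangle=\langle v,T_{f,f'}v'\rangle=\overline{f(v)}f'(v')$, so the pairing is isometric. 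It is surjective because $V_\pi\neq0$, and it is compatible with the balancing subspace \eqref{balancing-ideal} since $(f\circ T)(v)=f(Tv)$. Together these steps reproduce \eqref{localsinadinacom}, identifying the induced Hilbert space with ${\rm Hom}_G(V_\pi^*,L^2(U))$.

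Third, I check equivariance, which I expect to be the main place where care is needed. The algebra $C^*(G')$ acts on $X$ through the $\omega(G')$-action, which commutes with $G$ and therefore acts only on the first factor ${\rm Hom}_G(V_\pi^*,L^2(U))$ by post-composition. The induced action $a(x\otimes v)=\alpha(a)x\otimes v$ is therefore transported by each isomorphism above to post-composition by $\omega(g')$. The point to verify is that this holds for group elements and not only for the integrated algebra; I will argue this by nondegeneracy and uniqueness of integrated forms. The induced representation is then exactly the $G'$-representation on ${\rm Hom}_G(\pi^*,\omega)$. By \eqref{thetadaom} applied to $\pi^*$, this is $\theta(\pi^*)$, with the understanding that both sides are zero when $\pi^*$ does not occur in $\omega$. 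This yields \eqref{theta-induction}. As a consistency check, I would compare with the coinvariant description of Section \ref{prelude-Li-method}. The inner product \eqref{localisation-inner-product} on $S(U)\otimes V_\pi$ is exactly the form \eqref{localisation-IP-averaging}, that is, Li's form \eqref{Li-form-1}. The induced space is therefore $(\omega\otimes\pi)_G\cong\theta(\pi^*)$, which agrees with the computation above.
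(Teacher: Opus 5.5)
Your proposal is correct and takes the same route as the paper. The paper's proof cites \eqref{localsinadinacom} to identify $X\otimes_{C^*(G)}V_\pi$ with ${\rm Hom}_G(V_\pi^*,L^2(U))$ and then invokes \eqref{thetadaom}; you additionally verify, correctly, the steps the paper leaves implicit: the vanishing of the summands $X_\sigma\otimes V_\pi$ for $\sigma\neq\pi$, the isomorphism $V_\pi^*\otimes_{\End(V_\pi)}V_\pi\cong\C$ via evaluation, and the $G'$-equivariance of the identification.
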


Here we make the convention that if $\sigma$ does not enter the theta correspondence then $\theta(\sigma)=0$.

\begin{proof} By \eqref{localsinadinacom}, we have
$${\rm Ind}_{C^*(G)}^{C^*(G')}(X)(\pi) = X \otimes_{C^*(G)} V_\pi= {\rm Hom}_G(V_\pi^*,L^2(U)).$$
But ${\rm Hom}_G(V_\pi^*,L^2(U)) \simeq \theta(\pi^*)$ by \eqref{thetadaom}.
\end{proof}

\section{Method of Li via group $C^*$-algebras} 
\label{bimodule-construction}
Following \cite{Goffeng-Mesland-Sengun}, we will now discuss how the approach of Section \ref{hilbert-module-compact-group-2} generalizes beyond $G$ compact to the two cases of theta correspondence in which Li's method applies, namely, those described in Section \ref{stable-range-case} and Section \ref{tempered-case}. Let $(G',G)$ be a dual pair with $G$ the smaller group. Let $\omega$ be the oscillator representation of $G'\times G$ afforded on some $L^2(U)$ with $S(U)$ the subspace of Schwartz functions. Let $C_c^\infty(G)$ be the convolution algebra of smooth compactly-supported functions on $G$. We view $S(U)$ as a right $C_c^\infty(G)$-module:
$$\phi{\cdot}a := \int_G a(s) \omega(s^{-1})(\phi) ds, \quad \phi \in S(U), \ a \in C_c^\infty(G).$$
We form the matrix coefficient map on $S(U)$:
$$\langle \phi_1, \phi_2 \rangle\subrangle{G}(s) := \langle \phi_1, \omega(s)(\phi_2) \rangle\subrangle{L^2(U)}.$$
It follows from well-known matrix coefficients estimates of the oscillator representation that these have very fast decay (as $G$ is the smaller group), ensuring that they lie in the reduced group $C^*$-algebra $C^*_r(G)$. In fact, if $(G',G)$ is in the stable range, then we can conclude that these matrix coefficients are integrable and hence, they lie in $C^*(G)$. This gives us a right linear, Hermitian $C_r^*(G)$-valued ($C^*(G)$-valued if we are in the stable range case) form on $S(U)$ which is compatible with the right action of $C_c^\infty(G)$. 

Next, we need to argue that given $\phi \in S(U)$, the element $\langle \phi, \phi \rangle\subrangle{G}$ of $C_r^*(G)$ is positive. 
We proceed as in Section \ref{hilbert-module-compact-group-2}.  Given an irreducible tempered unitary representation $(\pi,V_\pi)$ of $G$, we want to prove that for every $v \in V_\pi$ and $\phi \in S(U)$, we have
\begin{equation} \label{positivity} \langle v, \pi(\langle \phi, \phi\rangle\subrangle{G})(v)\rangle\subrangle{V_\pi} \geq 0.
\end{equation}
The key observation, which we made a couple of times already, is that this is the same as asking for positivity of Li's inner product $({\cdot},{\cdot})_\pi$ defined in \eqref{Li-form-2} since
\begin{align*} \langle v, \pi(\langle \phi, \phi\rangle\subrangle{G})(v)\rangle\subrangle{V_\pi} &=
\int_G \langle \phi, \omega(s)(\phi)\rangle \subrangle{L^2(U)} \langle v,\pi(s)(v) \rangle\subrangle{V_\pi}   \ ds \\
&= ( \phi \otimes v, \phi \otimes v )_\pi.
\end{align*}
Therefore, \eqref{positivity} is the content of Theorem \ref{tempered-reps}. In the stable range case, we ask for \eqref{positivity} for all irreducible unitary representations $\pi$ of $G$ and it is given by Theorem \ref{Li-unitary}.

Standard arguments now tell us that the completion, denoted $\Theta$, of $S(U)$ with respect to the norm arising from the inner product $\langle {\cdot},{\cdot} \rangle\subrangle{G}$ acquires a Hilbert module structure over $C_r^*(G)$ (in the stable range case, over $C^*(G)$) extending the $C_c^\infty(G)$-module structure and the inner product on $S(U)$. The action of $G'$ on $S(U)$ gives rise to unitary Hilbert $C_r^*(G)$-module operators on $\Theta$. Again, standard arguments show that the action of $G'$ on $S(U)$ integrates to an action of  $C^*(G')$ on $\Theta$ by adjointable Hilbert module operators, turning $\Theta$ into a $(C^*(G'),C^*_r(G))$-correspondence (in the stable range, a $(C^*(G'),C^*(G))$-correspondence), which we call the {\bf theta bimodule}.

Again, the punchline is that the induction functor associated to $\Theta$ captures the theta correspondence.
\begin{theorem}Let $(G',G)$ be a dual pair with $G$ the smaller group and $\Theta$ be the theta bimodule defined above. Let $\pi$ be an irreducible tempered unitary representation of $G$. Then we have 
\begin{equation} \label{theta-induction-2} {\rm Ind}_{C^*_r(G)}^{C^*(G')}(\Theta)(\pi)\simeq \theta(\pi^*).
\end{equation}
If $(G',G)$ is in the stable range case then for any irreducible unitary representation $\pi$ of $G$, we have
\begin{equation} \label{theta-induction-3} {\rm Ind}_{C^*(G)}^{C^*(G')}(\Theta)(\pi)\simeq \theta(\pi^*).
\end{equation}
\end{theorem}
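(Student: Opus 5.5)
The plan is to identify the internal tensor product $\Theta\otimes_{C^*_r(G)}V_\pi$ directly with Li's space $L(\pi)$ and then invoke Proposition \ref{Li=theta}. By Section \ref{induction}, ${\rm Ind}(\Theta)(\pi)=\Theta\otimes_{C^*_r(G)}V_\pi$ is the Hilbert space completion of $\Theta\otimes^{\mathrm{alg}}V_\pi$ modulo null vectors for the form $\langle x\otimes v,x'\otimes v'\rangle=\langle v,\pi(\langle x,x'\rangle_G)v'\rangle_{V_\pi}$.

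\textbf{Reduction to a dense subspace.} The map $\Theta\otimes^{\mathrm{alg}}V_\pi\to\Theta\otimes_{C^*_r(G)}V_\pi$ is continuous in each variable, with $\|x\otimes v\|\le\|x\|\,\|v\|$. Hence the image of the dense subspace $S(U)\otimes^{\mathrm{alg}}V_\pi^\infty$ is dense in the completion. So it suffices to compute the induced form on $S(U)\otimes V_\pi^\infty$.

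\textbf{Identification of the form.} For $\phi_i\in S(U)$ and $v_i\in V_\pi^\infty$, the same computation used for positivity (polarised) gives
$$\langle \phi_1\otimes v_1,\phi_2\otimes v_2\rangle = \int_G\langle\phi_1,\omega(s)\phi_2\rangle\langle v_1,\pi(s)v_2\rangle\,ds=(\phi_1\otimes v_1,\phi_2\otimes v_2)_\pi .$$
This uses that the integrated form of $\pi$ on the matrix coefficient $\langle\phi_1,\phi_2\rangle_G$ is given by the convergent integral. Convergence holds by Theorem \ref{tempered-reps} (respectively Theorem \ref{Li-unitary} in the stable range), and in these cases the $C^*$-algebraic evaluation of $\pi$ agrees with the pointwise integral.

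Consequently the null space of the induced form restricted to $S(U)\otimes V_\pi^\infty$ is exactly the radical $\mathcal R_\pi$. The quotient embeds isometrically and densely, so the completion of $L(\pi)$ is isomorphic to $\Theta\otimes_{C^*_r(G)}V_\pi$. The $C^*(G')$-action on the left comes from $G'$ acting on $S(U)$, so this isomorphism is $G'$-equivariant. In the stable range the same argument works verbatim with $C^*(G)$ and all unitary $\pi$.

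\textbf{Conclusion and main obstacle.} If $L(\pi)\neq0$, Proposition \ref{Li=theta} gives $L(\pi)\simeq\theta(\pi^*)$. In the tempered case, Theorem \ref{tempered-reps}(2) says $L(\pi)=0$ exactly when $\pi$ does not enter the correspondence, matching the convention $\theta=0$. In the stable range, Theorem \ref{Li-unitary}(2) gives $L(\pi)\ne0$ always.

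The main obstacle is the justification that $\pi(\langle\phi_1,\phi_2\rangle_G)$, defined via the extension of $\pi$ to $C^*_r(G)$, is computed by the integral $\int_G\langle\phi_1,\omega(s)\phi_2\rangle\pi(s)\,ds$ on smooth vectors. The matrix coefficients are not a priori in $L^1(G)$ outside the stable range. I would handle this by approximating the coefficient by cutoffs $\chi_n\langle\phi_1,\phi_2\rangle_G\in C_c(G)$, using the decay estimates that place it in $C^*_r(G)$. Weak convergence of $\langle v_1,\pi(\chi_n f)v_2\rangle$ to the convergent integral from Theorem \ref{tempered-reps} then follows by dominated convergence. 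A secondary point is interpreting ``$\simeq\theta(\pi^*)$'' as the unitary completion of the smooth lift, consistent with Proposition \ref{Li=theta}.
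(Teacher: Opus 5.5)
Your proposal is correct and is essentially the argument the paper has in mind. The interior tensor product form on $S(U)\otimes^{\mathrm{alg}}V_\pi^\infty$ is literally Li's form $(\cdot,\cdot)_\pi$ (the ``key observation'' the paper uses for positivity), so $\Theta\otimes_{C^*_r(G)}V_\pi$ is the $G'$-equivariant Hilbert completion of $L(\pi)$, and Proposition~\ref{Li=theta} together with Theorems~\ref{tempered-reps} and~\ref{Li-unitary} finishes the proof. Your cutoff argument identifying $\pi(\langle\phi_1,\phi_2\rangle_G)$ with the convergent integral fills in a point the survey leaves implicit. One small caveat: in the vanishing case the convention needs $\theta(\pi^*)=0$, so you are implicitly using that $\pi$ enters the correspondence exactly when $\pi^*$ does (equivalently, reading Theorem~\ref{tempered-reps}(2) with $\pi^*$), and this is worth stating explicitly.
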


\section{Equal rank LTC as a Morita equivalence}
\label{seceualada}

In this section, following \cite{Mesland-Sengun}, we will consider ortho-symplectic dual pairs $({\rm Sp}_{2n},{\rm O}_{2n+1})$. These are called {\em equal rank} pairs because their dual groups are the same; namely ${\rm SO}_{2n}$. In this special case, both groups enjoy being the smaller group and as a result, the theta correspondence preserves many important features. For example, temperedness is preserved and discrete series representations lift to discrete series representations, with their formal dimensions preserved.

You will recall that at the beginning of the paper, we considered only even ortho-symplectic papers. The reason was only technical; for the dual pair $({\rm Sp}_{2n},{\rm O}_{2n+1})$; the oscillator representation 
$$\omega: {\rm Mp}_{2n} \times {\rm O}_{2n+1} \longrightarrow \mathcal{U}(L^2(U))$$
requires passing to the metaplectic double cover ${\rm Mp}_{2n}$ of the symplectic group. Once this technical point is established, our $C^*$-algebraic approach works just the same.

Let $(G',G)$ denote the equal rank dual pair above with the convention that we replace the symplectic group with its metaplectic double cover. Exploiting the fact that matrix coefficients of the oscillator representation have very fast decay for both groups (since both groups are now the smaller group), we can equip $S(U)$ not just with a right Hilbert module structure over $C^*_r(G)$ as we did in Section \ref{bimodule-construction}, but also with a left Hilbert module structure over $C^*_r(G')$. 

The key observation is the following. 
\begin{theorem} For an equal rank dual pair $(G',G)$, the theta bimodule $\Theta$ that we constructed in Section \ref{bimodule-construction} is actually an imprimitivity bimodule. 
\end{theorem}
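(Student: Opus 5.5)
We will verify the defining properties of an imprimitivity bimodule in turn: (a) $\Theta$ carries a left Hilbert $C^*_r(G')$-module structure; (b) the left inner product is compatible with the right one in the sense of \eqref{imprimitivity-compatibility}; (c) the left $C^*_r(G')$-action agrees with the integrated action of $C^*(G')$ already used to make $\Theta$ a correspondence.

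\textbf{Left module structure.} On $S(U)$ we define the left $C_c^\infty(G')$-action $a{\cdot}\phi:=\int_{G'} a(s')\omega(s')\phi\,ds'$ and the left inner product
$$\prescript{}{G'}\langle \phi_1,\phi_2\rangle(s'):=\langle \omega(s')\phi_2,\phi_1\rangle\subrangle{L^2(U)},$$
which is left linear, Hermitian and compatible with the action. By the fast decay of matrix coefficients for $G'$ (now also a smaller member), these functions lie in $C^*_r(G')$. Positivity is the same computation as in Section \ref{bimodule-construction} with the roles of $G$ and $G'$ swapped: for a tempered irreducible $\pi'$ of $G'$, the quantity $\langle v,\pi'(\prescript{}{G'}\langle\phi,\phi\rangle)v\rangle$ is Li's form $(\phi\otimes v,\phi\otimes v)_{\pi'}$ (up to conjugation conventions), which is non-negative by Theorem \ref{tempered-reps} applied with $G'$ as the smaller group. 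Since tempered irreducibles detect positivity in $C^*_r(G')$, we get positivity. Definiteness follows as before.

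\textbf{Compatibility.} This is where the equal-rank hypothesis enters, and I expect it to be the main obstacle. We must show, for $\phi_1,\phi_2,\phi_3\in S(U)$,
$$\prescript{}{G'}\langle\phi_1,\phi_2\rangle{\cdot}\phi_3=\phi_1{\cdot}\langle\phi_2,\phi_3\rangle\subrangle{G},$$
that is, $\int_{G'}\langle\omega(s')\phi_2,\phi_1\rangle\,\omega(s')\phi_3\,ds'=\int_G\langle\phi_2,\omega(s)\phi_3\rangle\,\omega(s^{-1})\phi_1\,ds$. Both sides define $G'\times G$-intertwining operators in the variables involved, and we plan to check the identity after pairing with a fourth vector $\phi_4$, reducing it to an equality of integrals of products of two matrix coefficients over $G'$ versus over $G$. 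Decomposing $L^2(U)$ via $L^2$-duality \eqref{aadadlnadlknda}, the pairing against a tempered irreducible $\pi\otimes\theta(\pi)$ becomes a Schur-type orthogonality identity on each side, as in the compact computation of Section \ref{cstarcomp}. There the factor $d_\pi$ was needed; here the analogous constants are formal degrees, and the identity holds exactly because the equal rank theta correspondence preserves formal degrees (and more generally Plancherel measures). So we would first prove the identity on the discrete series part using preservation of formal degrees and Schur orthogonality. We would then extend to the continuous part through the Plancherel decomposition of the tempered dual, using that the theta correspondence matches Plancherel measures of $G$ and $G'$. If this spectral argument proves too delicate, the fallback is a direct computation in the Schrödinger model: express both sides as kernels on $U\times U$ and compare them using the explicit action of $G'$.

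\textbf{Consistency and conclusion.} Finally, we check that the left action of $C_c^\infty(G')$ defined above is the restriction of the $C^*(G')$-action on $\Theta$, so that it factors through $C^*_r(G')$. Compatibility gives $\|\prescript{}{G'}\langle\phi,\phi\rangle\|=\|\langle\phi,\phi\rangle\subrangle{G}\|$ by the standard argument, so the two completions of $S(U)$ coincide. Hence $\Theta$ is a $(C^*_r(G'),C^*_r(G))$-imprimitivity bimodule.
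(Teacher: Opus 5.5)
You handle the left $C^*_r(G')$-module structure and its positivity correctly. Your discrete-spectrum computation is also right. If $x,y,z,w$ all lie in one discrete summand $\theta(\pi)\otimes\pi\subset L^2(U)$, Schur orthogonality turns the paired identity $\int_{G'}\overline{\langle x,\omega(s')y\rangle}\langle w,\omega(s')z\rangle\,ds'=\int_G\langle y,\omega(s)z\rangle\overline{\langle x,\omega(s)w\rangle}\,ds$ into one and the same expression, multiplied by $1/d(\theta(\pi))$ on the left and $1/d(\pi)$ on the right. On that part, compatibility is exactly equality of formal degrees. You should state the relative normalisation of Haar measures on $G'$ and $G$ explicitly, since rescaling only one of them breaks the identity.

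\textbf{The gap: the continuous spectrum.} In the equal rank case $L^2(U)$ has continuous spectrum, and there you cannot ``pair against a tempered irreducible $\pi\otimes\theta(\pi)$'', because these are not subrepresentations. Your plan would need all of the following, and none is supplied:
(i) the measure $\mu$ in \eqref{aadadlnadlknda} is absolutely continuous with respect to the Plancherel measure, with an identified density;
(ii) the $L^2$-correspondence agrees $\mu$-almost everywhere with the smooth theta correspondence (the paper asserts this only on the discrete part);
(iii) $\theta$ carries the Plancherel measure of $G$ exactly onto that of $G'$;
(iv) a Plancherel-theorem evaluation of both sides, including cross terms between discrete and continuous vectors.

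Point (iii) is not an auxiliary input. Grant (i) and (ii) and write $d\mu=m_G\,d\mu^{G}_{\mathrm{Pl}}=m_{G'}\,d\mu^{G'}_{\mathrm{Pl}}$, transported by $\theta$. Then (iv) gives the $G'$-side as $\int Q\,m_{G'}\,d\mu$ and the $G$-side as $\int Q\,m_G\,d\mu$, with the same pointwise four-linear form $Q$ (the same one that appears in your Schur computation). So in your framework (iii) is \emph{equivalent} to the continuous-spectrum case of the compatibility you set out to prove. Invoking it without proof or reference assumes the crux of the theorem. The fallback ``direct computation in the Schr\"odinger model'' is not an argument either.

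\textbf{What the paper does instead.} As stated right after the theorem, the compatibility ${}_{G'}\langle x,y\rangle\cdot z=x\cdot\langle y,z\rangle_G$ is proved in Mesland--\c{S}eng\"un (Section 5.4) as a consequence of Gan--Ichino's local Rallis inner product formula. That formula is what links the $G'$-integral to the $G$-integral of matrix coefficients of $\omega$. It is not a spectral decomposition of $L^2(U)$ combined with formal-degree and Plancherel matching.

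Note also the direction of the logic. In the paper, preservation of formal degrees is \emph{derived from} the resulting Morita equivalence (Mesland--\c{S}eng\"un, Section 8, via canonical traces and $K$-theory). So even your discrete step is usable only by importing Gan--Ichino's formal degree theorem from outside. The ingredient your proposal actually lacks is the local Rallis inner product formula itself.
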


In other words, the two $C^*$-valued inner products $\prescript{}{G'}\langle {\cdot},  {\cdot} \rangle$ and 
$\langle  {\cdot}, {\cdot} \rangle\subrangle{G}$ on $\Theta$ satisfy the comptability property \eqref{imprimitivity-compatibility}: 
$$ \prescript{}{G'}\langle x,y \rangle{\cdot} z= x{\cdot}\langle y, z \rangle\subrangle{G}$$
for all $x,y,z \in \Theta$. This is proven in \cite[Section 5.4]{Mesland-Sengun} and it follows from Gan and Ichino's local version of the Rallis inner product formula in \cite{Gan-Ichino}.

Let $C^*_{\theta}(G')$ and $C^*_{\theta}(G)$ be the two-sided closed ideals of $C^*_r(G')$ and $C^*_r(G)$, respectively, cut out by the $C^*$-valued inner products $\prescript{}{G'}\langle {\cdot},  {\cdot} \rangle$ and $\langle  {\cdot}, {\cdot} \rangle\subrangle{G}$ on $\Theta$  (as defined in Section \ref{Hilbert-module}). As $\Theta$ is an imprimitivity bimodule, it establishes a Morita equivalence between $C^*_{\theta}(G')$ and $C^*_{\theta}(G)$. This Morita equivalence explains much of the attractive features that equal rank local theta correspondence enjoys. 

\subsection{} To illustrate this, let us start by recalling that $C^*_{\theta}(G')$ and $C^*_{\theta}(G)$ are complemented ideals. This implies that their duals $\widehat{C^*_{\theta}(G')}$ and 
$\widehat{C^*_{\theta}(G)}$ are clopen subsets of the reduced duals of $G'$ and $G$. Morita equivalence implies that $\widehat{C^*_{\theta}(G')}$ and $\widehat{C^*_{\theta}(G)}$ are homeomorphic. It can be shown that the dual $\widehat{C^*_{\theta}(G')}$ and $\widehat{C^*_{\theta}(G)}$ are precisely the subset of the reduced duals of $G'$ and $G$ that enter the theta correspondence. In particular, if we take a discrete series representation $\pi$ of $G$ so that $\pi$ is isolated in the reduced dual of $G$, its image 
${\rm Ind}(\Theta)(\pi) \} \simeq \{\theta(\pi^*)$ will be isolated in $\widehat{C^*_{\theta}(G')}$. As the latter is clopen, we conclude that $\theta(\pi^*)$ is isolated in the reduced dual of $G'$. Thus $\theta(\pi^*)$, and hence $\theta(\pi)$, is a discrete series representation. A more sophisticated discussion (see \cite[Section 8]{Mesland-Sengun}) that involves $K$-theory and canonical traces of $C^*$-algebras shows that the formal dimensions of discrete series are also preserved, explaining this well-known fact within our Morita equivalence framework.

\subsection{} As another application of this Morita equivalence, we close our discussion with a transfer of characters result, originally due to Gan (\cite{Gan-20, Gan-21}).   Recall that if $\pi$ is a tempered irreducible representation of a reductive algebraic group $H$ over a local field, the character of $\pi$ is the tempered distribution on $H$, that is, the continuous linear functional 
$${\rm ch}(\pi) : \mathcal{S}(H) \to \C$$
on Harish-Chandra's Schwartz algebra $\mathcal{S}(H)$ of $H$ given by the trace 
$${\rm ch}(\pi)(\varphi):= {\rm tr} \ \pi(\varphi).$$
In our setting, the oscillator bimodule forms a connection between parts of the Schwartz algebras of the equal rank dual pair groups $G'$ and $G$, and as such gives a meaningful way of expressing the character one representation in terms of that of its theta lift. 

Indeed, given $x,y \in S(U) \subset \Theta$, recall that the matrix coefficient functions $\prescript{}{G'}\langle x,y \rangle \in C^*_r(G')$ and
$\langle x, y \rangle\subrangle{G} \in C^*_r(G)$ belong to the Harish-Chandra Schwartz algebras of the respective groups. As mentioned above, this fast decay property ensures that that for any irreducible tempered representations $\pi'$ of $G'$ and $\pi$ of $G$, the operators $\pi'(\langle x, y \rangle\subrangle{G})$ and $\pi(\langle x, y \rangle\subrangle{G})$ are of trace class.  The following is an unpublished result of Gan that is proven in \cite[Section 7]{Mesland-Sengun} as a short elementary consequence of the above Morita equivalence.
\begin{theorem}  Let $(G',G)$ be an equal rank dual pair. Let $\pi$ be a tempered irreducible representation of $G$ that enters the theta correspondence. Given $x,y \in S(U)$, we have
 $${\rm ch}(\theta(\pi))(\prescript{}{G'}{\langle x,y \rangle}) = {\rm ch}(\pi)(\langle y,x \rangle\subrangle{G}).$$
\end{theorem}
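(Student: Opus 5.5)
The plan is to read the identity as a trace computation in the induced representation, using the imprimitivity relation $\prescript{}{G'}\langle x,y \rangle{\cdot} z= x{\cdot}\langle y, z \rangle\subrangle{G}$ from the preceding Theorem. By the Theorem on induction via $\Theta$ (equation \eqref{theta-induction-2}), applied to the tempered representation $\pi^*$ (tempered since $\pi$ is), we may realise $\theta(\pi)$ as $\Theta\otimes_{C^*_r(G)} V_{\pi^*}$. Here $C^*(G')$, and so $C^*_\theta(G')$, acts on the left factor. So ${\rm ch}(\theta(\pi))(\prescript{}{G'}\langle x,y\rangle)$ is the trace of the operator $\xi\otimes v\mapsto \prescript{}{G'}\langle x,y\rangle\xi\otimes v$ on this Hilbert space.

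The first key step is to rewrite this operator with the compatibility relation. For $\xi\in\Theta$ and $v\in V_{\pi^*}$ we get
$$\prescript{}{G'}\langle x,y\rangle\xi\otimes v = x\langle y,\xi\rangle\subrangle{G}\otimes v = x\otimes \pi^*(\langle y,\xi\rangle\subrangle{G})v.$$
This shows the operator factors through the map $T_y:\Theta\otimes_{C^*_r(G)}V_{\pi^*}\to V_{\pi^*}$, $\xi\otimes v\mapsto \pi^*(\langle y,\xi\rangle\subrangle{G})v$, followed by $S_x: V_{\pi^*}\to \Theta\otimes_{C^*_r(G)}V_{\pi^*}$, $w\mapsto x\otimes w$. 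Both maps are bounded. Moreover $T_y=S_y^*$, which one checks directly from the inner product \eqref{localisation-inner-product}: $\langle x\otimes w, \xi\otimes v\rangle=\langle w,\pi^*(\langle x,\xi\rangle\subrangle{G})v\rangle$. Hence $\theta(\pi)(\prescript{}{G'}\langle x,y\rangle)=S_xS_y^*$.

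Next we use cyclicity of the trace: ${\rm tr}(S_xS_y^*)={\rm tr}(S_y^*S_x)$, provided one of the two products is trace class and the factors are suitably controlled. The operator $S_y^*S_x$ on $V_{\pi^*}$ is $w\mapsto \pi^*(\langle y,x\rangle\subrangle{G})w$. This is trace class by the fast decay of matrix coefficients (they lie in the Harish-Chandra Schwartz algebra), as recalled before the statement. So the left side equals ${\rm ch}(\pi^*)(\langle y,x\rangle\subrangle{G})$.

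The final step converts this to a statement about $\pi$: for $f$ in the Schwartz algebra, ${\rm ch}(\pi^*)(f)={\rm ch}(\pi)(f^\vee)$ with $f^\vee(s)=f(s^{-1})$. For matrix coefficients, $\langle y,x\rangle\subrangle{G}(s^{-1})=\overline{\langle x,y\rangle\subrangle{G}(s)}$. I expect the bookkeeping of contragredients, complex conjugation and the $x\leftrightarrow y$ swap to be the main fiddly point. Depending on conventions, one may prefer to apply induction to $\pi$ itself and use $\theta(\pi^*)^*\simeq\theta(\pi)$, arranging the identification so that exactly $\langle y,x\rangle\subrangle{G}$ appears, as in the statement.

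The analytic obstacle is justifying the cyclicity ${\rm tr}(AB)={\rm tr}(BA)$ when only $BA$ is a priori finite rank-like. I would handle it by noting that $S_y^*S_x$ trace class does not immediately give $S_xS_y^*$ trace class. Instead, I would write $\pi^*(\langle y,x\rangle\subrangle{G})$ as a product of two Hilbert--Schmidt operators, which is possible after factoring $x$ as $x'\cdot a$ with $a\in C_c^\infty(G)$ via a Dixmier--Malliavin type factorisation. This makes $S_x$ and $S_y$ Hilbert--Schmidt, so both products are trace class with equal traces.
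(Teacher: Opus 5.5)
Your argument is sound up to the identity
\[ \mathrm{ch}(\theta(\pi))(\prescript{}{G'}\langle x,y\rangle)=\mathrm{ch}(\pi^*)(\langle y,x\rangle_G). \]
This is the mechanism the survey has in mind; it defers to Mesland--\c{S}eng\"un, where the statement is a short consequence of the Morita equivalence. Your Dixmier--Malliavin justification of cyclicity also works, with two small adjustments. Use finite sums $x=\sum_i x_i'\cdot a_i$, or simply $x=x\cdot e_K$ for an idempotent $e_K$ in the $p$-adic case. Admissibility then makes $S_x=\sum_i S_{x_i'}\pi^*(a_i)$ trace class.

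The gap is the final step, which you leave open. Bookkeeping cannot close it, because your identity differs from the claim exactly by $\pi^*$ versus $\pi$.

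Your $f^\vee$ route lands on $\mathrm{ch}(\pi)(\overline{\langle x,y\rangle_G})=\overline{\mathrm{ch}(\pi^*)(\langle x,y\rangle_G)}$. Since $\langle y,x\rangle_G=\langle x,y\rangle_G^*$, the target is $\mathrm{ch}(\pi)(\langle y,x\rangle_G)=\overline{\mathrm{ch}(\pi)(\langle x,y\rangle_G)}$. These agree for all $x,y$ only if $\mathrm{ch}(\pi^*)=\mathrm{ch}(\pi)$ on matrix coefficients.

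Your alternative route does not work either. What you would need there is $\theta(\pi^*)\simeq\theta(\pi)$, not a statement about a dual. Moreover, the identity $\theta(\pi^*)^*\simeq\theta(\pi)$ is false in general. Complex conjugation on $L^2(U)$ commutes with the geometric $G$-action but carries $\omega_\chi$ to $\omega_{\bar\chi}$ on $G'$. So the true relation is $\theta_\chi(\pi^*)^*\simeq\theta_{\bar\chi}(\pi)$. Already for the compact equal rank pair $(\mathrm{Mp}_{2n}(\R),\mathrm{O}(2n+1))$, $\theta_\chi(\pi)$ is a lowest weight module and $\theta_{\bar\chi}(\pi)$ a highest weight module, so they are not isomorphic.

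The missing ingredient is representation-theoretic. In the survey's convention $G=\mathrm{O}_{2n+1}$, and every irreducible representation of an orthogonal group is self-dual. The reason is that every element of $\mathrm{O}(V)$ is a product of two involutions, hence conjugate to its own inverse, so characters are invariant under $g\mapsto g^{-1}$. In the $p$-adic case this is also a theorem of M{\oe}glin--Vign\'eras--Waldspurger. Hence $\pi^*\cong\pi$, so $\mathrm{ch}(\pi^*)=\mathrm{ch}(\pi)$, and your identity is already the statement.

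Equivalently, run your argument on $\pi$ itself. By \eqref{theta-induction-2} it gives $\mathrm{ch}(\theta(\pi^*))(\prescript{}{G'}\langle x,y\rangle)=\mathrm{ch}(\pi)(\langle y,x\rangle_G)$, which is what the Morita equivalence directly produces in the survey's normalization. Self-duality then gives $\theta(\pi^*)=\theta(\pi)$.
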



\end{document}